\newcommand{\I}{{\bf 1}}
\newtheorem{proposition}{Proposition}[section]
\newtheorem{theorem}[proposition]{Theorem}
\newtheorem{lemma}[proposition]{Lemma}
\newtheorem{remark}[proposition]{Remark}
\renewcommand\d{{\rm d}}
\renewcommand\o{{\rm o}}
\newcommand{\e}{{\rm e}}
\newcommand{\nc}{\newcommand}
\nc{\R}{{\mathbb R}}
\nc{\N}{{\mathbb N}}
\nc{\Z}{{\mathbb Z}}
\DeclareMathOperator{\card}{card}
\DeclareMathOperator{\inter}{int}
\nc{\BP}{\mathbb{P}}
\nc{\BQ}{\mathbb{Q}}
\nc{\BE}{\mathbb{E}}
\nc{\BS}{\mathbb{S}}
\nc{\G}{{\mathbb G}}
\renewcommand{\H}{{\mathcal H}}
\newcommand\var{\mathop{\rm Var}\nolimits}
\newcommand\cov{\mathop{\rm Cov}\nolimits}
\newcommand\dsum{\mathop{\sum\sum}\limits}
\newcommand\longnd{\mathop{\stackrel{{\rm d}}{\longrightarrow}}\limits_{n \to \infty}}
\newcommand\longm{\mathop{\longrightarrow}\limits_{m \to \infty}}
\newcommand\longn{\mathop{\longrightarrow}\limits_{n \to \infty}}
\begin{document}
\author{Daniel Hug, G{\"u}nter Last, Zbyn\v ek Pawlas and Wolfgang Weil}
\title{Statistics for   Poisson   models of overlapping spheres} 
\date{\today}
\maketitle
\begin{abstract} 
\noindent 
The paper considers the stationary Poisson Boolean model with spherical
grains and proposes a family of nonparametric estimators
for the radius distribution. These estimators
are based on observed distances and radii,
weighted in an appropriate way. They are ratio-unbiased
and asymptotically consistent for growing observation
window. It is shown that the
asymptotic variance exists and is given by a fairly explicit integral
expression. Asymptotic normality is established under a suitable
integrability assumption on the weight function.
The paper also provides a short discussion of
related estimators as well as a simulation study.
\end{abstract}
\renewcommand{\thefootnote}{{}}
\footnote{
{\em MSC} 2010 {\em subject classifications.} Primary 60D05, 60G57, 52A21; 
secondary 60G55, 52A22, 52A20, 53C65, 46B20, 62G05.\newline
  {\em Key words and phrases.} Stochastic geometry, spatial statistics,
contact distribution function, Boolean model, spherical typical grain,
point process, nonparametric estimation, radius distribution, asymptotic normality.}

\section{Introduction}
We consider a stationary random closed set $Z$ in $\R^d$ 
($d\ge 2$) which is given as a union of random balls of the form
\begin{equation}\label{eq:1.1}
Z:=\bigcup_{n\ge 1}B(\xi_n,R_n),
\end{equation}
where $B(x,r)$ is a closed Euclidean ball with radius $r\ge 0$ centered at $x\in\R^d$,
$\Phi:=\{\xi_n:n\ge 1\}$ is a stationary Poisson point process on
$\R^d$, and the sequence $(R_n)_{n\ge 1}$ is independent of $\Phi$
and is formed by independent non-negative random variables 
with common distribution $\G$. Let $R$ be a generic random variable
with distribution $\G$. We will always assume that it has a finite $2d$-th moment, that is, 
\begin{align}\label{R2d}
\BE R^{2d} <\infty.
\end{align}
Definition \eqref{eq:1.1} provides an important model
in stochastic geometry with numerous applications in physics and
materials science, for instance. The set $Z$ is called a {\em stationary 
Boolean model with spherical grains}. A simulated realization
for $d=2$ is shown in Figure \ref{fig1}.

It is a fundamental statistical problem to retrieve information on
$\G$ based on an observation of $Z$ in a bounded window $W$. 
Our aim in this paper is to propose and study a family of nonparametric
estimators of $\G$. The nonparametric 
estimation of the radius distribution $\G$ has been studied before; see \cite[Chapter 5.6]{Hall}, 
\cite{Molchanov90} or \cite{MolchanovStoyan}. In \cite{MolchanovStoyan}
a kernel estimator is obtained by the method of tangent points. The asymptotic
properties of this estimator are studied in \cite{HW00}. For earlier work on statistics for the Boolean model, we 
refer to \cite[Chapter 3.3]{SKM}, \cite{Molchanov} and the references therein.

In the following, we assume that all random elements are 
defined on an underlying probability space
$(\Omega,\mathcal{F},\BP)$. For a Borel set  
$A\subset\R^d$,  we write
$\Phi(A):=\card\{n\ge 1:\xi_n\in A\}$ and assume that $\Phi$ has a positive and finite intensity
\[
\gamma:=\BE\Phi([0,1]^d).
\]
Throughout the paper, let $B$ be a compact convex set which contains the origin $\o$ and 
a non-degenerate segment. We call $B$ {\em structuring element}
or {\em gauge body}, but we point out that $B$ need not be centrally symmetric 
or full-dimensional. The $B$-distance from a point $x\in\R^d$ to a set $A \subset \R^d$ is
\[
d_B(x,A) := \inf\{r \ge 0: (x+rB) \cap A\ne\emptyset\}\in[0,\infty].
\]
Clearly, if $\o\in\inter B$ and $A$ is nonempty and closed, then the infimum is a minimum. 
The most common structuring element is the unit ball $B(\o,1)$, for which we also write $B^d$ and which 
is based on a scalar product and a norm denoted by $\|\cdot\|$.  
For given $x \notin Z$, almost surely $d_B(x,Z)<\infty$ whenever $R$ satisfies $\BP(R>0)>0$. 
We always assume that this condition is fulfilled. Then 
almost surely there is a unique $n\in\N$ (that is, a ball $B(\xi_n,R_n)$) such that  
$(x+d_B(x,Z)B) \cap B(\xi_n,R_n)\ne\emptyset$ (see \cite[Lemma 3.1]{HLW} or
  \cite[Lemma 9.5.1]{SW08}). 
In this case, we define $r_B(x,Z)$ as $R_n$.
Figure \ref{fig1} illustrates the definition of $d_B(x,Z)$ and $r_B(x,Z)$ for $x \notin Z$ and $B=B^2$.

\begin{figure}[h]
  \centering
  \includegraphics[width=0.95\textwidth]{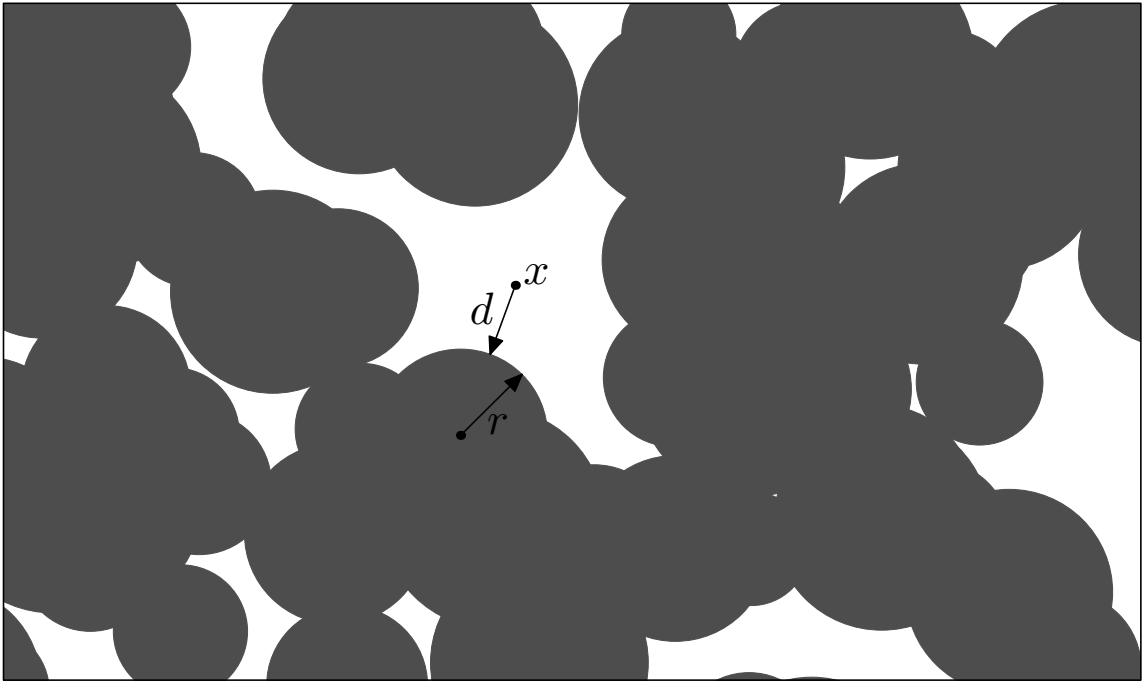}
  \caption{A simulated realization of a planar stationary Boolean 
  model $Z$ with spherical grains observed in a rectangular observation
  window. The symbol $d$ denotes $d_{B^2}(x,Z)$ and $r$ stands for
  $r_{B^2}(x,Z)$.}
  \label{fig1}
\end{figure}

For $s, r \ge 0$, we write $B_{s,r} := sB \oplus rB^d$ for the Minkowski sum of 
$sB$ and $rB^d$. Let $|A|_d$ denote the $d$-dimensional Lebesgue measure 
of a set $A\subset \R^d$, let   
$\kappa_k := |B^k|_k = {\pi^{k/2}}/{\Gamma(1+{k}/{2})}$  denote the volume of the $k$-dimensional
unit ball and write $V_j(B)$ for the $j$-th intrinsic volume of $B$ (see \cite[Chapter 14.3]{SW08}).
Then, for $t\in\R^+:=[0,\infty)$, the {\em empty space function} $F_B$ of $Z$ is given by
\begin{align} \notag
F_B(t) :=&\, \BP\bigl(d_B(\o,Z) \leq t\bigr) = \BP(Z \cap tB \ne \emptyset) 
= 1-\exp\left\{-\gamma \BE |B_{t,R}|_d\right\} \\ \label{1.3}
=&\, 1 - \exp\left\{-\gamma \sum_{j=0}^d \kappa_{d-j}V_j(B)t^j \BE R^{d-j}\right\}.
\end{align}
The empty space function is a useful summary statistics of random sets (see \cite{SKM,HBG1999}). 
In the case of a strictly convex gauge body $B$ a detailed study of $F_B$ for (non-stationary) 
germ-grain models can be found in \cite{HugLast}. 
We denote the complementary empty space function
by ${\bar F}_B(t) := 1 - F_B(t)$.
As a consequence of \cite[Theorem 3.2]{HLW}, we get 
for all measurable functions ${\tilde g}: [0,\infty] \times \R^+ \rightarrow \R^+$ such that 
${\tilde g}(0,r)={\tilde g}(\infty,r)=0$, $r\in\R^+$, 
and all $x\in\R^d$ that
\begin{align}\label{g}
\BE  {\tilde g}\bigl(d_B(x,Z),r_B(x,Z)\bigr) =
\gamma\int_0^\infty\int_0^\infty {\tilde g}(t,r)h_B(t,r){\bar F}_B(t)\,\d t\,\G(\d r)
\end{align}
with
\begin{align*} 
h_B(t,r) := \sum_{j=0}^{d-1} (j+1)\kappa_{d-1-j}V_{j+1}(B)r^{d-1-j}t^j,
\end{align*}
for $t,r\in [0,\infty)$; 
see also \cite[Theorem 9.5.2]{SW08}. Note that on the left-hand side of \eqref{g} the restriction 
to $\{0<d_B(x,Z)<\infty\}$ is expressed by the condition ${\tilde g}(0,r)={\tilde g}(\infty,r)=0$. 

For Borel sets $C\subset\R^+$, $A \subset\R^d$ and a measurable
function $f: [0,\infty] \rightarrow \R^+$ with $f(0)=f(\infty)=0$, we
define a random measure $\eta_A$ by
\begin{equation}\label{etaW}
\eta_A(C) := \int_A \I\{r_B(x,Z)\in C\} f\bigl(d_B(x,Z)\bigr)h_B\bigl(d_B(x,Z),r_B(x,Z)\bigr)^{-1}\,\d x,
\end{equation}
where $\I\{\cdot\}$ denotes the indicator function.
Here we put $0/0:=0$.  
Thus, in particular, the integration  effectively extends over the complement 
$Z^c:=\{x\in\R^d:d_B(x,Z)>0\}$ of $Z$. Throughout the paper we shall assume that 
\begin{equation}\label{beta}
0 < \beta := \int_0^\infty f(t){\bar F}_B(t)\,\d t < \infty.
\end{equation}
In view of \eqref{1.3} this is a rather weak assumption on $f$. 
Moreover, we assume that the origin is an interior point of $B$ if $\BP(R=0)>0$. This assumption ensures 
that $h_B(t,r)>0$ for $t\in (0,\infty)$ and $\mathbb{G}$-almost all $r\in\R^+$.  
By Fubini's theorem and \eqref{g}, we obtain
\begin{equation}\label{Eeta}
\BE \eta_A(C) = \gamma\,\beta\, |A|_d\, \G(C).
\end{equation}

Consider a compact convex observation window $W\subset\R^d$
with $|W|_d>0$. We propose an estimator $\widehat{\G}$
for $\G$ based on the information contained in the data
$\left\{\bigl(d_B(x,Z),r_B(x,Z)\bigr):x\in W\setminus Z\right\}$. 
Note that these data may also require
information from outside $W$. The estimator is given by
\begin{align}\label{estimator2}
\widehat{\G}(C) := \frac{\eta_W(C)}{\eta_W(\R^+)},
\end{align}
where $C \subset \R^+$ is a Borel set. 
If the denominator in \eqref{estimator2} is zero, then the numerator is zero as well, and we use 
the convention $0/0 := 0$. 
From \eqref{Eeta} we see that $\BE \eta_W(C) = \gamma\,\beta\, |W|_d\, \G(C)$
and $\BE \eta_W(\R^+) = \gamma\,\beta\,|W|_d$. 
This means that $\widehat{\G}$ is a {\em ratio-unbiased} estimator of $\G$.

The paper is organized as follows. In Section \ref{sec:second} we study
second order properties of \eqref{etaW}. We show that the asymptotic variance exists
and is given by a fairly explicit integral expression. Consequently, the estimator 
\eqref{estimator2} is asymptotically weakly consistent as the compact convex observation 
window $W$ is expanding. Strong consistency follows from the spatial ergodic theorem.
Section \ref{sec:normality} contains the proof of asymptotic normality 
under an integrability assumption on the function $f$.
In Section \ref{sec:planar} we consider the estimator $\widehat{\G}$ in the plane and 
for the spherical ($B=B(\o,1)$) as well as for the linear case ($B$ a segment). 
We also discuss some related estimators. 
A simulation study is performed to compare the behaviour
of different discrete versions of these estimators of the radius distribution $\G$.

\section{Second order properties} \label{sec:second}
\setcounter{equation}{0}

For a Borel set $A \subset \R^d$, we define the {\em restricted Boolean model} as
\[
Z(A) := \bigcup_{n: \xi_n \in A}  B(\xi_n,R_n).
\]
Clearly, $Z(A)$ is not stationary unless $A=\R^d$.  
Further, for  $ t \in \R^+$ the complementary empty space function of $Z(A)$ with respect to $x \in \R^d$ is defined by
\begin{align}
{\bar F}_B^A(x;t) &:= \BP\Bigl(d_B\bigl(x,Z(A)\bigr) > t\Bigr) = \BP\bigl((x+tB) \cap Z(A) = \emptyset\bigr) \nonumber \\
&= \BE \prod_{n \ge 1} \bigl(1-\I\{(x+tB) \cap B(\xi_n,R_n) \neq \emptyset\}\I\{\xi_n \in A\}\bigr) \nonumber \\
&= \exp\Bigl\{-\gamma\BE\int_{\R^d}\I\{(x+tB) \cap B(y,R) \not = \emptyset\}\I\{y \in A\}\,\d y\Bigr\} \nonumber \\
&= \exp\left\{-\gamma \BE \bigl|(x+B_{t,R}) \cap A\bigr|_d\right\}.
\label{empty1exp}
\end{align}
In particular, we have ${\bar F}_B^{\R^d}(x;t) = {\bar F}_B(t)$.

For Borel sets $A_1, A_2 \subset \R^d$ and $t_1,t_2 \in \R^+$, it will be convenient to introduce the complementary {\em second-order empty 
space function} with respect to $x_1,x_2 \in \R^d$ as
\begin{align}
&{\bar F}_B^{A_1,A_2}(x_1,x_2;t_1,t_2) := \BP\Bigl(d_B\bigl(x_1,Z(A_1)\bigr) > t_1, d_B\bigl(x_2,Z(A_2)\bigr) > t_2\Bigr) \label{empty2} \\
&\quad = \BP\bigl((x_1+t_1B) \cap Z(A_1) = \emptyset, (x_2+t_2B) \cap Z(A_2) = \emptyset\bigr) \nonumber \\
&\quad = \BE \prod_{n \ge 1} \bigl(1-\I\{(x_1+t_1B) \cap B(\xi_n,R_n) \not = \emptyset\}\I\{\xi_n \in A_1\}\bigr) \nonumber\\
&\quad\qquad\times \bigl(1-\I\{(x_2+t_2B) \cap B(\xi_n,R_n) \not = \emptyset\}
\I\{\xi_n \in A_2\}\bigr) \nonumber\\
&\quad = \exp\left\{-\gamma \BE \bigl|\bigl((x_1+B_{t_1,R}) \cap A_1\bigr) 
\cup \bigl((x_2+B_{t_2,R}) \cap A_2\bigr)\bigr|_d\right\}. \label{empty2exp}
\end{align}
This function is related to the second-order contact distribution function which
is studied in \cite{Ballani}.

In order to obtain a more concise statement in the subsequent Lemma \ref{l1} (and again in the proof of Theorem \ref{clt1}), 
we introduce for given Borel sets $A_1, A_2 \subset \R^d$ two functions, $I_1(A_1,A_2)$ 
and $I_2(A_1,A_2)$, depending on the arguments $(x_1,x_2,y,r)\in(\R^d)^3\times\R^+$ and 
$(x_1,x_2,y_1,y_2,r_1,r_2)\in(\R^d)^4\times(\R^+)^2 $, respectively, which are defined by
\[
I_1(A_1,A_2) (x_1,x_2,y,r):= \I\{y \in A_1 \cap A_2\}{\bar F}_B^{A_1,A_2}\Bigl(x_1,x_2;
d_B\bigl(x_1,B(y,r)\bigr),d_B\bigl(x_2,B(y,r)\bigr)\Bigr)
\]
and
\begin{align*}
&I_2(A_1,A_2)(x_1,x_2,y_1,y_2,r_1,r_2) := \I\{y_1 \in A_1\}\I\{y_2 \in A_2\}\\
&\qquad \times
\Bigl[
\Bigl(1-\I\{y_2 \in A_1\}\I\left\{d_B\bigl(x_1,B(y_2,r_2)\bigr) \leq d_B\bigl(x_1,B(y_1,r_1)\bigr)\right\}\Bigr) \\
&\qquad\qquad\times \Bigl(1-\I\{y_1 \in A_2\}\I\left\{d_B\bigl(x_2,B(y_1,r_1)\bigr) \leq d_B\bigl(x_2,B(y_2,r_2)\bigr)\right\}\Bigr) \\
&\qquad\qquad\times {\bar F}_B^{A_1,A_2}\Bigl(x_1,x_2;d_B\bigl(x_1,B(y_1,r_1)\bigr),d_B\bigl(x_2,B(y_2,r_2)\bigr)\Bigr) \\
&\qquad\qquad\qquad - {\bar F}_B^{A_1}\Bigl(x_1;d_B\bigl(x_1,B(y_1,r_1)\bigr)\Bigr)
{\bar F}_B^{A_2}\Bigl(x_2;d_B\bigl(x_2,B(y_2,r_2)\bigr)\Bigr)\Bigr].
\end{align*}
If the arguments of these two functions are clear from the context, they are sometimes omitted.

\begin{lemma} \label{l1}
Let $A_1, A_2 \subset \R^d$ be Borel sets and let $x_1, x_2 \in \R^d$. 
If $\tilde{g}: [0,\infty] \times \R^+ \rightarrow \R^+$ is a measurable function with $\tilde g(0,r)=\tilde g(\infty,r)=0$ for $r\in\R^+$, then
\begin{align*}
&\cov \Bigl(\tilde g\bigl(d_B(x_1,Z(A_1)),r_B(x_1,Z(A_1))\bigr),\tilde g\bigl(d_B(x_2,Z(A_2)),r_B(x_2,Z(A_2))\bigr)\Bigl)\\
&\quad = \gamma \int_0^\infty \int_{\R^d} \tilde g\bigl(d_B(x_1,B(y,r)),r\bigr)
\tilde g\bigl(d_B(x_2,B(y,r)),r\bigr) I_1(A_1,A_2) (x_1,x_2,y,r) 
\,\d y\,\G(\d r) \\
&\qquad + \gamma^2 \int_0^\infty\int_0^\infty \int_{\R^d}\int_{\R^d} \tilde g\bigl(d_B(x_1,B(y_1,r_1)),r_1\bigr)
\tilde g\bigl(d_B(x_2,B(y_2,r_2)),r_2\bigr)  \\
&\qquad\qquad\quad\times I_2(A_1,A_2)(x_1,x_2,y_1,y_2,r_1,r_2)\,\d y_1\,\d y_2\,\G(\d r_1)\,\G(\d r_2).
\end{align*}
\end{lemma}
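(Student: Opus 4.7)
Writing $Y_i := \tilde g\bigl(d_B(x_i,Z(A_i)),r_B(x_i,Z(A_i))\bigr)$ for $i=1,2$, the task is to compute $\BE[Y_1 Y_2]-\BE[Y_1]\BE[Y_2]$ via the reduced Mecke formulas for the marked Poisson process $\Psi:=\{(\xi_n,R_n):n\ge 1\}$ on $\R^d\times\R^+$ with intensity measure $\gamma\,\d y\otimes \G(\d r)$. Since almost surely the $B$-nearest ball in $Z(A_i)$ is unique, $Y_i$ admits the representation
\[
Y_i = \sum_{(y,r)\in\Psi} \tilde g\bigl(d_B(x_i,B(y,r)),r\bigr)\,\I\{y\in A_i\}\,J_i(y,r;\Psi\setminus\{(y,r)\}),
\]
where $J_i(y,r;\Psi')$ is the indicator that no $(y',r')\in\Psi'$ with $y'\in A_i$ satisfies $d_B(x_i,B(y',r'))\le d_B(x_i,B(y,r))$. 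Expanding $Y_1Y_2$ yields a double sum over $\Psi\times\Psi$, which I split into diagonal ($\psi_1=\psi_2$) and off-diagonal ($\psi_1\ne\psi_2$) contributions.

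For the diagonal sum, the one-point Mecke/Slivnyak formula produces a single $\gamma$-integral over $(y,r)$ in which the expectation of $J_1(y,r;\Psi)J_2(y,r;\Psi)$ equals, by \eqref{empty2}, the value $\bar F_B^{A_1,A_2}\bigl(x_1,x_2;d_B(x_1,B(y,r)),d_B(x_2,B(y,r))\bigr)$; this reproduces the $I_1$-term of the stated covariance. For the off-diagonal part, the two-point reduced Mecke formula introduces independent variables $(y_1,r_1),(y_2,r_2)$. The bookkeeping step is that $J_1(y_1,r_1;\Psi\setminus\{\psi_1\})$ originally contains $\psi_2$; decomposing this indicator into events ``the rest of $\Psi$ is far'' and ``$\psi_2$ is far'' factors it as $J_1(y_1,r_1;\Psi\setminus\{\psi_1,\psi_2\})$ times the purely deterministic factor $1-\I\{y_2\in A_1\}\I\{d_B(x_1,B(y_2,r_2))\le d_B(x_1,B(y_1,r_1))\}$, and analogously at $x_2$. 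Applying Mecke and then using $\BE[J_1(y_1,r_1;\Psi)J_2(y_2,r_2;\Psi)]=\bar F_B^{A_1,A_2}\bigl(x_1,x_2;d_B(x_1,B(y_1,r_1)),d_B(x_2,B(y_2,r_2))\bigr)$ yields the first summand inside the bracket defining $I_2$.

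Computing $\BE[Y_1]\BE[Y_2]$ by two independent applications of the one-point Mecke formula produces the product $\bar F_B^{A_1}\bigl(x_1;d_B(x_1,B(y_1,r_1))\bigr)\,\bar F_B^{A_2}\bigl(x_2;d_B(x_2,B(y_2,r_2))\bigr)$, which is precisely the subtracted term in the same bracket; subtracting then assembles the identity. The main obstacle is the bookkeeping of nearest-ball indicators under Slivnyak reduction: the constraint that each extracted ball be the closest in its own set must be split cleanly into a stochastic factor (whose expectation reproduces the complementary second-order empty space function) and a deterministic interaction factor between the two extracted balls, with the rearrangement $\I\{A\text{ or }B\}=1-\I\{A^c\}\I\{B^c\}$ producing the specific bracketed form in $I_2$. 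Fubini in the final step is justified by the moment assumption \eqref{R2d}, much as in the proof of \eqref{g}.
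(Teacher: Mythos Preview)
Your proof is correct and follows essentially the same route as the paper: represent each $Y_i$ as a sum over the marked Poisson process indexed by the nearest ball, split $Y_1Y_2$ into diagonal and off-diagonal parts, apply the one- and two-point Mecke (Slivnyak) formulas, and factor the nearest-ball indicator after reduction into the stochastic piece (yielding $\bar F_B^{A_1,A_2}$) and the deterministic interaction between the two extracted balls. One small remark: the appeal to \eqref{R2d} for Fubini is unnecessary here, since $\tilde g\ge 0$ makes every term nonnegative and Tonelli applies directly; the moment assumption plays no role in this lemma.
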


\begin{proof}
For $n \in \N$, $x\in\R^d$, and $i\in\{1,2\}$, we define the event
\[
D_n^{(i)}(x) := \Bigl\{d_B\bigl(x,\bigcup_{k \ne n: \xi_k \in A_i} B(\xi_k,R_k)\bigr) >
d_B\bigl(x,B(\xi_n,R_n)\bigr)\Bigr\}.
\]
Then 
\begin{align*}
&\BE \tilde g\bigl(d_B(x_1,Z(A_1)),r_B(x_1,Z(A_1))\bigr)\tilde g\bigl(d_B(x_2,Z(A_2)),r_B(x_2,Z(A_2))\bigr) \\
&\quad = \BE \sum_{n: \xi_n \in A_1} \sum_{m: \xi_m \in A_2} \I_{D_n^{(1)}(x_1) \cap D^{(2)}_m(x_2)}
\tilde g\bigl(d_B(x_1,B(\xi_n,R_n)),R_n\bigr)\tilde g\bigl(d_B(x_2,B(\xi_m,R_m)),R_m\bigr) \\
&\quad = \BE \sum_{n: \xi_n \in A_1 \cap A_2} \I_{D_n^{(1)}(x_1) \cap D^{(2)}_n(x_2)}
\tilde g\bigl(d_B(x_1,B(\xi_n,R_n)),R_n\bigr)\tilde g\bigl(d_B(x_2,B(\xi_n,R_n)),R_n\bigr) \\
&\qquad + \BE \dsum_{n \ne m: \xi_n \in A_1, \xi_m \in A_2} \I_{D_n^{(1)}(x_1) \cap D^{(2)}_m(x_2)}
\tilde g\bigl(d_B(x_1,B(\xi_n,R_n)),R_n\bigr)\tilde g\bigl(d_B(x_2,B(\xi_m,R_m)),R_m\bigr) \\
&\quad =: J_1 + J_2.
\end{align*}
Applying Mecke's formula (see \cite[Corollary 3.2.3]{SW08}), we obtain
\begin{align*}
J_1 &= \gamma \int_0^\infty \int_{A_1 \cap A_2} \tilde g\bigl(d_B(x_1,B(y,r)),r\bigr)
\tilde g\bigl(d_B(x_2,B(y,r)),r\bigr)  \\
&\qquad \times \BP\Bigl(d_B\bigl(x_1,Z(A_1)\bigr)>d_B\bigl(x_1,B(y,r)\bigr),
d_B\bigl(x_2,Z(A_2)\bigr)>d_B\bigl(x_2,B(y,r)\bigr)\Bigr)
\,\d y\,\G(\d r)
\end{align*}
and
\begin{align*}
J_2 &= \gamma^2 \int_0^\infty\int_0^\infty \int_{A_2}\int_{A_1} \tilde g\bigl(d_B(x_1,B(y_1,r_1)),r_1\bigr)
\tilde g\bigl(d_B(x_2,B(y_2,r_2)),r_2\bigr)  \\
&\qquad\times \BE \I\left\{d_B\bigl(x_1,Z_{y_2}(A_1)\bigr)>d_B\bigl(x_1,B(y_1,r_1)\bigr)\right\} 
\I\left\{d_B\bigl(x_2,Z_{y_1}(A_2)\bigr)>d_B\bigl(x_2,B(y_2,r_2)\bigr)\right\} \\
&\qquad\times \,\d y_1\,\d y_2\,\G(\d r_1)\,\G(\d r_2),
\end{align*}
where $Z_{y_2}(A_1) = Z(A_1) \cup B(y_2,r_2)$ if $y_2 \in A_1$ and $Z_{y_2}(A_1) = Z(A_1)$ if
$y_2 \notin A_1$. Analogously, $Z_{y_1}(A_2) = Z(A_2) \cup B(y_1,r_1)$ if $y_1 \in A_2$ and
$Z_{y_1}(A_2) = Z(A_2)$ if $y_1 \notin A_2$.
Hence,
\begin{align*}
J_2 &= \gamma^2 \int_0^\infty\int_0^\infty \int_{A_2}\int_{A_1} \tilde g\bigl(d_B(x_1,B(y_1,r_1)),r_1\bigr)
\tilde g\bigl(d_B(x_2,B(y_2,r_2)),r_2\bigr)  \\
&\qquad\times \Bigl(1-\I\{y_2 \in A_1\}\I\{d_B\bigl(x_1,B(y_2,r_2)\bigr) \leq d_B\bigl(x_1,B(y_1,r_1)\bigr)\}\Bigr) \\
&\qquad\times \Bigl(1-\I\{y_1 \in A_2\}\I\{d_B\bigl(x_2,B(y_1,r_1)\bigr) \leq d_B\bigl(x_2,B(y_2,r_2)\bigr)\}\Bigr) \\
&\qquad\times \BP\Bigl(d_B\bigl(x_1,Z(A_1)\bigr) > d_B\bigl(x_1,B(y_1,r_1)\bigr),
d_B\bigl(x_2,Z(A_2)\bigr)>d_B\bigl(x_2,B(y_2,r_2)\bigr)\Bigr) \\
&\qquad\times\,\d y_1\,\d y_2\,\G(\d r_1)\,\G(\d r_2).
\end{align*}
Finally, 
\begin{align*}
&\BE \tilde g\bigl(d_B(x_1,Z(A_1)),r_B(x_1,Z(A_1))\bigr) = 
\BE \sum_{n: \xi_n \in A_1} \I_{D_n^{(1)}(x_1)}\tilde g\bigl(d_B(x_1,B(\xi_n,R_n)),R_n\bigr) \\
&\qquad = \gamma\int_0^\infty \int_{A_1} 
\tilde g\bigl(d_B(x_1,B(y_1,r_1)),r_1\bigr){\bar F}_B^{A_1}\bigl(x_1;d_B(x_1,B(y_1,r_1))\bigr)\,\d y_1\,\G(\d r_1).
\end{align*}
\end{proof}

Our aim is to analyze the second-order properties of the random measure $\eta_A$ given by \eqref{etaW}. 
For this reason, we work with the complementary second-order empty space function \eqref{empty2}. 
For $A_1=A_2=\R^d$, $t_1,t_2\in \R^+$, and $u=x_2-x_1$, by the stationarity of $Z$ this function turns into
\begin{align}
{\bar F}_B^{(2)}(u;t_1,t_2) :=\,& \BP\bigl(d_B(\o,Z) > t_1, d_B(u,Z) > t_2\bigr) \nonumber\\
=\,& \exp\Bigl\{-\gamma \BE \bigl|B_{t_1,R} \cup (u+B_{t_2,R})\bigr|_d\Bigr\} \nonumber\\
=\,& {\bar F}_B(t_1){\bar F}_B(t_2)\exp\left\{\gamma \BE \kappa_B(u;t_1,t_2,R)\right\},\label{vor2.6}
\end{align}
where 
\begin{equation} \label{kappaB}
\kappa_B(u;t_1,t_2,r) := \bigl|B_{t_1,r} \cap (u+B_{t_2,r})\bigr|_d.
\end{equation}
Observe that for any $u \in \R^d$ and $t_1, t_2 \in \R^+$, we have
\begin{equation}\label{FB2lowerestimate}
{\bar F}_B^{(2)}(u;t_1,t_2) \geq {\bar F}_B(t_1){\bar F}_B(t_2)
\end{equation}
and
\begin{equation}
{\bar F}_B^{(2)}(u;t_1,t_2) \leq \exp\Bigl\{-\frac{\gamma}{2}\BE \bigl(|B_{t_1,R}|_d 
+ |B_{t_2,R}|_d\bigr)\Bigr\} = \sqrt{{\bar F}_B(t_1){\bar F}_B(t_2)}.
\label{barF}
\end{equation}
These inequalities will be used subsequently. 
In addition,  we shall need the assumption
\begin{equation}
\int_0^\infty f(t)\e^{-ct}\,\d t < \infty ,
\label{assumption}
\end{equation}
where $c := 4^{-1}\gamma\kappa_{d-1} V_1(B)\mathbb{E} R^{d-1}<\infty$ and $c>0$ since $V_1(B)>0$ (recall 
that $B$ contains a non-degenerate line segment) and $\BP(R>0)>0$.

\begin{proposition}\label{p1}
Assume that \eqref{assumption} is satisfied. 
If $C \subset \R^+$ is a Borel set and  $W_1,W_2\subset\R^d$ are compact convex sets, then
\[
\cov\bigl(\eta_{W_1}(C),\eta_{W_2}(C)\bigr) = 
\int_{\R^d} |W_1 \cap (W_2-u)|_d \\
\left[\gamma \tau_1(C,u)+ \gamma^2 \tau_2(C,u)\right]\,\d u,
\]
where
\begin{align}
\tau_1(C,u) :=\,& \int_C \int_{\R^d} \frac{f\bigl(d_B(x,B(\o,r))\bigr)}{h_B\bigl(d_B(x,B(\o,r)),r\bigr)}
\frac{f\bigl(d_B(u+x,B(\o,r))\bigr)}{h_B\bigl(d_B(u+x,B(\o,r)),r\bigr)} \label{label1}\\
\,&\qquad\qquad \times {\bar F}_B^{(2)}\bigl(u;d_B(x,B(\o,r)),d_B(u+x,B(\o,r))\bigr)\,\nonumber
\d x\,\G(\d r)
\end{align}
and
\begin{align}
\tau_2(C,u) :=& \int_C\int_C \int_{\R^d} \int_{\R^d}
\frac{f\bigl(d_B(x_1,B(\o,r_1))\bigr)}{h_B\bigl(d_B(x_1,B(\o,r_1)),r_1\bigr)}
\frac{f\bigl(d_B(x_2,B(\o,r_2))\bigr)}{h_B\bigl(d_B(x_2,B(\o,r_2)),r_2\bigr)} \label{label2}\\
\,&\qquad\qquad \times q(u;x_1,x_2,r_1,r_2)\,\d x_1\,\d x_2\,\G(\d r_1)\,\G(\d r_2),\nonumber
\end{align}
for $u\in\R^d$, and
\begin{align*}
&q(u;x_1,x_2,r_1,r_2)\\
& := \I\bigl\{d_B(x_2,B(u,r_2))>d_B(x_1,B(\o,r_1))\bigr\}
\I\bigl\{d_B(x_1,B(-u,r_1))>d_B(x_2,B(\o,r_2))\bigr\} \\
&\qquad \times {\bar F}_B^{(2)}\bigl(u;d_B(x_1,B(\o,r_1)),d_B(x_2,B(\o,r_2))\bigr)
- {\bar F}_B\bigl(d_B(x_1,B(\o,r_1))\bigr){\bar F}_B\bigl(d_B(x_2,B(\o,r_2))\bigr), 
\end{align*}
for $x_1,x_2\in\R^d$ and $r_1,r_2\in\R^+$. 
\end{proposition}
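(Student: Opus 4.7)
The plan is to reduce the computation to Lemma \ref{l1} applied with $A_1=A_2=\R^d$ for the test function $\tilde g(t,r):=\I\{r\in C\}f(t)h_B(t,r)^{-1}$, after which stationarity and a change of variables isolate the dependence on $u:=x_2-x_1$. By Fubini (whose validity is discussed below),
\[
\cov\bigl(\eta_{W_1}(C),\eta_{W_2}(C)\bigr)=\int_{W_1}\int_{W_2}\cov\bigl(\tilde g(d_B(x_1,Z),r_B(x_1,Z)),\tilde g(d_B(x_2,Z),r_B(x_2,Z))\bigr)\,\d x_2\,\d x_1,
\]
and Lemma \ref{l1} expands the integrand as a $\gamma$-term plus a $\gamma^2$-term.

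With $A_1=A_2=\R^d$ all indicator factors in $I_1,I_2$ are identically $1$, and by stationarity ${\bar F}_B^{\R^d,\R^d}(x_1,x_2;\cdot,\cdot)={\bar F}_B^{(2)}(u;\cdot,\cdot)$ and ${\bar F}_B^{\R^d}(x_i;\cdot)={\bar F}_B(\cdot)$. I then perform the substitutions $y\mapsto x_1-y$ in the $I_1$-integral and $y_i\mapsto x_i-y_i$ in the $I_2$-integral. Using the translation identity $d_B(x,B(a,r))=d_B(x-a,B(\o,r))$, every distance argument appearing in the integrand is brought into one of the standard forms $d_B(x,B(\o,r))$, $d_B(x+u,B(\o,r))$ (for $\tau_1$), or $d_B(x_i,B(\o,r_i))$, $d_B(x_j,B(\pm u,r_j))$ (for $\tau_2$). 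The two factors $\bigl(1-\I\{\cdot\le\cdot\}\bigr)$ in $I_2$ become the indicator product $\I\{\cdot>\cdot\}\I\{\cdot>\cdot\}$ appearing in $q$. Since the inner integrand now depends on $(x_1,x_2)$ only through $u$, the outer change of variables $(x_1,x_2)\mapsto(x_1,u)$ yields the geometric factor $\int_{\R^d}\I\{x_1\in W_1, x_1+u\in W_2\}\,\d x_1=|W_1\cap(W_2-u)|_d$, which completes the derivation of the formula.

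The main technical obstacle is the justification of Fubini, which relies on assumption \eqref{assumption}. Since $V_1(B)>0$ (as $B$ contains a non-degenerate segment) and $\BP(R>0)>0$, expanding $\BE|B_{t,R}|_d$ yields $\BE|B_{t,R}|_d\ge\kappa_{d-1}V_1(B)\,t\,\BE R^{d-1}=4ct/\gamma$, whence ${\bar F}_B(t)\le\e^{-4ct}$ and, by \eqref{barF}, ${\bar F}_B^{(2)}(u;t_1,t_2)\le\e^{-2c(t_1+t_2)}$. Differentiating the elementary identity $|\{x:d_B(x,B(\o,r))\le t\}|_d=|B_{t,r}|_d$ in $t$ gives the Steiner-type relation $\int_{\R^d}\varphi(d_B(x,B(\o,r)))\,\d x=\int_0^\infty\varphi(t)h_B(t,r)\,\d t$. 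Applying these to the $\tau_1$-integrand and integrating successively in $u$ and $x$ produces a bound of the form $\G(C)\bigl(\int_0^\infty f(t)\e^{-ct}\,\d t\bigr)^2<\infty$ by \eqref{assumption}; the $\tau_2$-integrand is treated analogously after splitting $q$ into its two summands and handling each by the same exponential estimate. Together with $|W_1\cap(W_2-u)|_d\le|W_1|_d$, this establishes absolute integrability throughout and legitimises all applications of Fubini.
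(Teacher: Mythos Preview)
Your proof is correct and follows essentially the same route as the paper's: define the test function $g(t,r)=\I\{r\in C\}f(t)h_B(t,r)^{-1}$, apply Fubini and Lemma~\ref{l1} with $A_1=A_2=\R^d$, and use translation invariance to reduce everything to a function of $u=x_2-x_1$. The paper carries out the $u$-substitution first (via stationarity, specialising to $x_1=\o$, $x_2=u$) and then reflects the $y$-variables, whereas you substitute $y\mapsto x_1-y$, $y_i\mapsto x_i-y_i$ first and extract $u$ afterwards; the computations are equivalent.

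One remark on the integrability sketch: your treatment of $\tau_1$ (integrate first in $u$, then in $x$, using $\bar F_B^{(2)}\le\e^{-2c(t_1+t_2)}$ and the Steiner relation) matches exactly what the paper does in the proof of Theorem~\ref{asvar}. For $\tau_2$, however, ``the same exponential estimate'' applied to each summand of $q$ separately does \emph{not} yield global $u$-integrability, since after bounding the indicators by $1$ the first summand becomes $u$-independent. What saves you is precisely the compactness you invoke in the last sentence: each summand of $q$ is bounded by a $u$-independent quantity whose $(x_1,x_2,r_1,r_2)$-integral is finite (namely $\G(C)^2\bigl(\int_0^\infty f(t)\e^{-2ct}\,\d t\bigr)^2$, resp.\ $\G(C)^2\beta^2$), so multiplication by the compactly supported factor $|W_1\cap(W_2-u)|_d$ gives absolute integrability. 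This is enough for the Proposition; the sharper global bound $\int_{\R^d}|\tau_2(C,u)|\,\d u<\infty$ needed later requires the more delicate splitting $q=q_1-q_2$ carried out in the proof of Theorem~\ref{asvar}.
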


\begin{proof}
To abbreviate the notation, we define the function 
\begin{align}\label{shortg}
g(t,r):=\I\{r \in C\} f(t)h_B(t,r)^{-1},
\end{align}
for $t\in[0,\infty]$ and $r\in\R^+$, with the previous conventions in the cases where $t\in \{0,\infty\}$. 
Recall also that $h_B(t,r)>0$ for $t\in (0,\infty)$ and $\mathbb{G}$-almost all $r\in\R^+$. 
Using Fubini's theorem and stationarity, we get
\begin{align*}
&\cov \bigl(\eta_{W_1}(C),\eta_{W_2}(C)\bigr) \\
&\quad = \int_{W_1}\int_{W_2}
\cov \Bigl(g\bigl(d_B(x_1,Z),r_B(x_1,Z)\bigr),g\bigl(d_B(x_2,Z),r_B(x_2,Z)\bigr)\Bigr)\,\d x_2\,\d x_1 \\
&\quad = \int_{\R^d} |W_1 \cap (W_2-u)|_d 
\cov \Bigl(g\bigl(d_B(\o,Z),r_B(\o,Z)\bigr),g\bigl(d_B(u,Z),r_B(u,Z)\bigr)\Bigr)\,\d u. 
\end{align*}
By Lemma \ref{l1} with $A_1=A_2=\R^d$, $x_1=\o$ and $x_2=u$, we obtain that
\[
\cov \Bigl(g\bigl(d_B(\o,Z),r_B(\o,Z)\bigr),g\bigl(d_B(u,Z),r_B(u,Z)\bigr)\Bigr) = J_1(u) + J_{21}(u) - J_{22},
\]
where
\begin{align*}
J_1(u) &:= \gamma \int_0^\infty \int_{\R^d} g\bigl(d_B(\o,B(x,r)),r\bigr)
g\bigl(d_B(u,B(x,r)),r\bigr)  \\
&\qquad\qquad \times {\bar F}_B^{(2)}\bigl(u;d_B(\o,B(x,r)),d_B(u,B(x,r))\bigr)\,\d x\,\G(\d r), 
\end{align*}
\begin{align*}
J_{21}(u) &:= \gamma^2 \int_0^\infty\int_0^\infty \int_{\R^d} \int_{\R^d}
g\bigl(d_B(\o,B(x_1,r_1)),r_1\bigr)g\bigl(d_B(\o,B(x_2,r_2)),r_2\bigr) \\
&\qquad\quad \times \I\bigl\{d_B(-u,B(x_2,r_2))>d_B(\o,B(x_1,r_1))\bigr\}\\
&\qquad\quad \times \I\bigl\{d_B(u,B(x_1,r_1))>d_B(\o,B(x_2,r_2))\bigr\} \\
&\qquad\quad \times {\bar F}_B^{(2)}\bigl(u;d_B(\o,B(x_1,r_1)),d_B(\o,B(x_2,r_2))\bigr)
\,\d x_1\,\d x_2\,\G(\d r_1)\,\G(\d r_2),
\end{align*} 
and
\begin{align*}
J_{22} &:= \gamma^2 \int_0^\infty\int_0^\infty \int_{\R^d} \int_{\R^d}
g\bigl(d_B(\o,B(x_1,r_1)),r_1\bigr)g\bigl(d_B(\o,B(x_2,r_2)),r_2\bigr) \\
&\qquad\quad \times {\bar F}_B\bigl(d_B(\o,B(x_1,r_1))\bigr){\bar F}_B\bigl(d_B(\o,B(x_2,r_2))\bigr)\,\d x_1\,\d x_2\,\G(\d r_1)\,\G(\d r_2).
\end{align*}
Using $d_B\bigl(u,B(x,r)\bigr) = d_B\bigl(u-x,B(\o,r)\bigr)$ and the reflection invariance of Lebesgue measure,
we deduce that
\begin{align*}
J_1(u) &= \gamma \int_0^\infty \int_{\R^d} g\bigl(d_B(x,B(\o,r)),r\bigr)
g\bigl(d_B(u+x,B(\o,r)),r\bigr)  \\
&\qquad\quad \times {\bar F}_B^{(2)}\bigl(u;d_B(x,B(\o,r)),d_B(u+x,B(\o,r))\bigr)\,\d x\,\G(\d r), 
\end{align*}
\begin{align*}
J_{21}(u) &= \gamma^2 \int_0^\infty \int_0^\infty \int_{\R^d} \int_{\R^d}
g\bigl(d_B(x_1,B(\o,r_1)),r_1\bigr)g\bigl(d_B(x_2,B(\o,r_2)),r_2\bigr) \\
&\qquad\quad \times \I\bigl\{d_B(x_2,B(u,r_2))>d_B(x_1,B(\o,r_1))\bigr\}\\
&\qquad\quad \times\I\bigl\{d_B(x_1,B(-u,r_1))>d_B(x_2,B(\o,r_2))\bigr\} \\
&\qquad\quad\times {\bar F}_B^{(2)}\bigl(u;d_B(x_1,B(\o,r_1)),d_B(x_2,B(\o,r_2))\bigr)
\,\d x_1\,\d x_2\,\G(\d r_1)\,\G(\d r_2), 
\end{align*}
and
\begin{align*}
J_{22} &= \gamma^2 \int_0^\infty\int_0^\infty \int_{\R^d} \int_{\R^d}
g\bigl(d_B(x_1,B(\o,r_1)),r_1\bigr)g\bigl(d_B(x_2,B(\o,r_2)),r_2\bigr) \\
&\qquad \quad \times {\bar F}_B\bigl(d_B(x_1,B(\o,r_1))\bigr){\bar F}_B\bigl(d_B(x_2,B(\o,r_2))\bigr)
\,\d x_1\,\d x_2\,\G(\d r_1)\,\G(\d r_2).
\end{align*}
The assertion now follows by recalling \eqref{shortg}. The integrability of $\tau_1(C,\cdot)$ and $\tau_2(C,\cdot)$, which  
is explicitly stated in \eqref{integrab2.4}, 
will be shown in the proof of Theorem \ref{asvar} and is implied by the assumption \eqref{assumption}.
\end{proof}

\begin{remark} \label{Bball}\rm
Recall that $\|\cdot\|$ denotes the Euclidean norm on $\R^d$. If $B=B^d$ is the unit ball, then $d_{B^d}\bigl(u,B(x,r)\bigr) = (\|x-u\|-r)^+$,
\[
h_{B^d}(t,r) = \sum_{j=0}^{d-1} d\kappa_d\binom{d-1}{j} r^{d-1-j}t^j = d\kappa_d (t+r)^{d-1},
\]
and
\[
\kappa_{B^d}(u;t_1,t_2,r) = \bigl|B(\o,t_1+r) \cap B(u,t_2+r)\bigr|_d.
\]
Hence, $\tau_1(C,u)$ and $\tau_2(C,u)$ from Proposition \ref{p1} 
may be slightly simplified. In particular, then we have
\begin{align*}
\tau_2(C,u) &= \int_C\int_C \int_{\R^d} \int_{\R^d}
\frac{f\bigl((\|x_1\|-r_1)^+\bigr)}{h_{B^d}\bigl((\|x_1\|-r_1)^+,r_1\bigr)}
\frac{f\bigl((\|x_2\|-r_2)^+\bigr)}{h_{B^d}\bigl((\|x_2\|-r_2)^+,r_2\bigr)} \\
&\qquad \times \Bigl[\I\left\{(\|x_2-u\|-r_2)^+ > (\|x_1\|-r_1)^+\right\}
\I\left\{(\|x_1+u\|-r_1)^+ > (\|x_2\|-r_2)^+\right\} \\
&\qquad \times {\bar F}_{B^d}^{(2)}\bigl(u;(\|x_1\|-r_1)^+,(\|x_2\|-r_2)^+\bigr)
-{\bar F}_{B^d}\bigl((\|x_1\|-r_1)^+\bigr){\bar F}_{B^d}\bigl((\|x_2\|-r_2)^+\bigr)\Bigr] \\
&\qquad\times \,\d x_1\,\d x_2\,\G(\d r_1)\,\G(\d r_2) \\
&= \int_C \int_C \int_{0}^\infty \int_{\BS^{d-1}} \int_{0}^\infty \int_{\BS^{d-1}}
\frac{f(s_1)}{h_{B^d}(s_1,r_1)}(s_1+r_1)^{d-1}\frac{f(s_2)}{h_{B^d}(s_2,r_2)}(s_2+r_2)^{d-1} \\
&\qquad \times \Bigl[\I\left\{(\|(s_2+r_2)v_2-u\|-r_2)^+ > s_1\right\}\I\left\{(\|(s_1+r_1)v_1+u\|-r_1)^+ > s_2\right\} \\
&\qquad \times {\bar F}_{B^d}^{(2)}(u;s_1,s_2)-{\bar F}_{B^d}(s_1){\bar F}_{B^d}(s_2)\Bigr]
\,\H^{d-1}(\d v_1)\,\d s_1\,\H^{d-1}(\d v_2)\,\d s_2\,\G(\d r_1)\,\G(\d r_2) \\
&= \int_C \int_C \int_0^\infty \int_0^\infty f(s_1)f(s_2)
\Bigl[\frac{\H^{d-1}\bigl(\partial B(\o,s_2+r_2) \cap B(u,s_1+r_2)^c\bigr)}{\H^{d-1}\bigl(\partial B(\o,s_2+r_2)\bigr)} \\
&\qquad \times \frac{\H^{d-1}\bigl(\partial B(\o,s_1+r_1) \cap B(-u,s_2+r_1)^c\bigr)}{\H^{d-1}\bigl(\partial B(\o,s_1+r_1)\bigr)}
{\bar F}_{B^d}^{(2)}(u;s_1,s_2)-{\bar F}_{B^d}(s_1){\bar F}_{B^d}(s_2)\Bigr] \\
&\qquad\times\,\d s_1\,\d s_2\,\G(\d r_1)\,\G(\d r_2),
\end{align*}
where $\BS^{d-1}$ is the unit sphere in $\R^d$, 
$\H^{d-1}$ is the $(d-1)$-dimensional Hausdorff measure, and $\partial B(x,r)$ is the boundary 
of $B(x,r)$. We used that $f\bigl((\|x\|-r)^+\bigr)$ is non-zero only if $\|x\| > r$. Then $x = (s+r)v$ for $s>0$ and 
$v \in \BS^{d-1}$.
\end{remark}

Next we state a special case of \cite[Theorem 2.1 and Remark 3.1]{HLW}  in the form 
needed in the present context. Let ${\tilde g}:\R^d\to[0,\infty]$ be measurable, and let $K,B\subset\R^d$ be convex bodies  
such that $\o\in B$ and $K, B$ are 
in general relative position. Since in our application, we shall only need the case $K=rB^d$, for $r\in \R^+$, the assumption of general relative position  will be satisfied for any choice of $B$. Then we have
\begin{align*} 
&\int_{\R^d}\I\{0< d_B(z,rB^d)<\infty\}{\tilde g}(z)\, \d z\\
&\qquad =\sum_{j=0}^{d-1}\binom{d-1}{j}
\int_0^\infty\int t^{d-1-j}{\tilde g}(z+tb)\, \Theta_{j;d-j}\bigl(rB^d;B^*;\d(z,b)\bigr)\, \d t,
\end{align*}
where $B^*:=-B$ and the mixed support measures $\Theta_{j;d-j}(rB^d;B^*;\cdot)$, $j\in\{0,\ldots,d-1\}$, are finite Borel measures on $\R^{2d}$. 
Using \cite[(14.18)]{SW08} (cf.~\cite[(4.2.26) and (5.3.8)]{Sch93}) and \cite[middle of p.~327]{KW}, we obtain for the total measures 
$\Theta_{j;d-j}(rB^d;B^*;\R^{2d})=r^jd\binom{d}{j}\kappa_jV_{d-j}(B)$. In particular, this yields for any measurable function $\tilde f:[0,\infty]\to[0,\infty]$ with $\tilde f(0)=\tilde f(\infty)=0$ that 
\begin{equation}\label{refB}
\int_{\R^d}\tilde f\bigl(d_B(z,rB^d)\bigr)\,\d z=\int_{0}^\infty h_B(t,r)\tilde f(t)\, \d t.
\end{equation}

We now turn to the asymptotic properties of the ratio-unbiased
estimator \eqref{estimator2}. Our setting is similar to \cite{Molchanov},
where all limit theorems refer to a growing observation window in $\R^d$. 
More formally, we consider a  
sequence $(W_n)_{n\in\N}$ of compact, convex sets $W_n\subset\R^d$ such that 
$W_n \subset W_{n+1}$ for all $n \in \N$ and the inradius of $W_n$
tends to $\infty$ as $n \to \infty$.

\begin{theorem} \label{asvar} 
Assume that \eqref{assumption} is fulfilled. Then
\begin{equation}
\frac{\var \eta_{W_n}(C)}{|W_n|_d} \longn \sigma^2(C).
\label{asvareta}
\end{equation}
The asymptotic variance is finite and given by
\begin{equation}
\sigma^2(C) = \gamma \int_{\R^d} \tau_1(C,u)\,\d u + \gamma^2 \int_{\R^d} \tau_2(C,u)\,\d u,
\label{asvarC}
\end{equation}
where $\tau_1(C,u)$ and $\tau_2(C,u)$ are defined in \eqref{label1} and \eqref{label2}, respectively. 
Moreover, if $0<\G(C)<1$, then $\sigma^2(C)>0$.
\end{theorem}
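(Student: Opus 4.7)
The plan is to apply Proposition \ref{p1} with $W_1=W_2=W_n$, which gives
\[
\frac{\var\eta_{W_n}(C)}{|W_n|_d}=\int_{\R^d}\frac{|W_n\cap(W_n-u)|_d}{|W_n|_d}\bigl[\gamma\tau_1(C,u)+\gamma^2\tau_2(C,u)\bigr]\,\d u,
\]
and then to pass to the limit inside the integral. The ratio $|W_n\cap(W_n-u)|_d/|W_n|_d$ is bounded by $1$ and, since the inradius of $W_n$ tends to infinity, converges pointwise to $1$ for every fixed $u\in\R^d$. Once $\tau_1(C,\cdot)$ and $\tau_2(C,\cdot)$ are shown to be absolutely integrable on $\R^d$, dominated convergence delivers the limit \eqref{asvarC} together with the finiteness of $\sigma^2(C)$.

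For $\tau_1$, I would combine \eqref{barF} with Fubini and the shift $v=u+x$ to factorize the resulting $x$- and $v$-integrals, then apply \eqref{refB} with $\tilde f(t)=f(t)h_B(t,r)^{-1}\sqrt{\bar F_B(t)}$ to obtain
\[
\int_{\R^d}\tau_1(C,u)\,\d u\le\G(C)\Bigl(\int_0^\infty f(t)\sqrt{\bar F_B(t)}\,\d t\Bigr)^2.
\]
Dropping positive terms in \eqref{1.3} yields $\bar F_B(t)\le e^{-4ct}$ for the constant $c$ appearing in \eqref{assumption}, hence the right-hand side is finite by \eqref{assumption}. For $\tau_2$ the same approach fails, because the two summands in $q$ are individually non-integrable in $u$: $\bar F_B^{(2)}(u;t_1,t_2)\to\bar F_B(t_1)\bar F_B(t_2)>0$ as $|u|\to\infty$, so the cancellation in $q$ must be preserved. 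My plan is to split
\[
q=\mathbf{1}\{\cdot\}\mathbf{1}\{\cdot\}\bigl[\bar F_B^{(2)}-\bar F_B\bar F_B\bigr]-\bigl[1-\mathbf{1}\{\cdot\}\mathbf{1}\{\cdot\}\bigr]\bar F_B\bar F_B
\]
into two non-negative pieces (the first by \eqref{FB2lowerestimate}). For the first, the identity $\bar F_B^{(2)}-\bar F_B\bar F_B=\bar F_B\bar F_B(\exp\{\gamma\BE\kappa_B(u;t_1,t_2,R)\}-1)$ from \eqref{vor2.6} combined with the observation that $\kappa_B(u;t_1,t_2,R)$ is compactly supported in $u$, with support of volume polynomial in $(t_1,t_2,R)$, provides pointwise control. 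For the second, the complementary indicator $1-\mathbf{1}\{\cdot\}\mathbf{1}\{\cdot\}$ likewise restricts $u$ to a set of polynomially controlled volume in the various distances. Assembling these bounds via Fubini, \eqref{refB}, the exponential decay of $\bar F_B$, the moment assumption \eqref{R2d}, and \eqref{assumption} produces an integrable envelope and simultaneously verifies \eqref{asvarC}. This bookkeeping is the main technical obstacle of the proof.

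For the positivity of $\sigma^2(C)$ when $0<\G(C)<1$, my plan is to condition on the germ process $\Phi$ and use the law of total variance $\var\eta_{W_n}(C)\ge\BE\,\var(\eta_{W_n}(C)\mid\Phi)$. Given $\Phi$, the radii $(R_n)_{n\ge1}$ are i.i.d.\ with law $\G$, and standard Boolean-model geometry yields that the expected number of germs $\xi_n\in W_n$ whose contribution to $\eta_{W_n}(C)$ depends non-trivially on $R_n$ alone, namely as the product of the Bernoulli$(\G(C))$ indicator $\mathbf{1}\{R_n\in C\}$ with a strictly positive weight determined by the remaining configuration, grows at rate $|W_n|_d$. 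A binomial-type decoupling then gives the lower bound $\var(\eta_{W_n}(C)\mid\Phi)\ge\mathrm{const}\cdot|W_n|_d\cdot\G(C)(1-\G(C))$, whence $\sigma^2(C)>0$.
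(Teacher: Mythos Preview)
Your argument for the limit \eqref{asvareta} and for the integrability of $\tau_1$ and $\tau_2$ follows the paper's proof closely. The only difference is cosmetic: you decompose $q$ as
\[
q=\mathbf{1}\{\cdot\}\mathbf{1}\{\cdot\}\bigl[\bar F_B^{(2)}-\bar F_B\bar F_B\bigr]-\bigl[1-\mathbf{1}\{\cdot\}\mathbf{1}\{\cdot\}\bigr]\bar F_B\bar F_B,
\]
whereas the paper writes $q=q_1-q_2$ with $q_1=\bar F_B^{(2)}-\bar F_B\bar F_B$ and $q_2=\bigl[1-\mathbf{1}\{\cdot\}\mathbf{1}\{\cdot\}\bigr]\bar F_B^{(2)}$. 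Both splittings yield two non-negative pieces; the subsequent bounds (via $1-e^{-a}\le a$, $\int\kappa_B(u;\cdot)\,\d u=|B_{t_1,R}|_d|B_{t_2,R}|_d$, the volume $|B_{t_1,r_2}|_d$ of the region where an indicator fails, \eqref{refB}, and the moment condition \eqref{R2d}) are the same. So this part is fine.

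The positivity argument, however, has a real gap. You assert that for a positive density of germs the contribution to $\eta_{W_n}(C)$ factorises as $\mathbf{1}\{R_n\in C\}\cdot w_n$ with $w_n>0$ \emph{independent of $R_n$}. This is not true: writing $A_n:=\{x\in W_n:r_B(x,Z)=R_n\}$ for the basin of germ~$n$, the contribution is
\[
\mathbf{1}\{R_n\in C\}\int_{A_n}\frac{f\bigl(d_B(x,B(\xi_n,R_n))\bigr)}{h_B\bigl(d_B(x,B(\xi_n,R_n)),R_n\bigr)}\,\d x,
\]
and both the basin $A_n$ and the integrand depend on $R_n$ (and on neighbouring radii). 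No amount of isolation of $\xi_n$ removes this dependence. Consequently the ``binomial-type decoupling'' you invoke does not apply, and the conditional-variance lower bound $\var(\eta_{W_n}(C)\mid\Phi)\ge\mathrm{const}\cdot|W_n|_d\,\G(C)(1-\G(C))$ is unproven. The paper avoids this difficulty entirely: it applies the Poisson Poincar\'e inequality of Last and Penrose,
\[
\var H(\Psi)\ge\gamma\int_0^\infty\!\int_{\R^d}\bigl(\BE[H(\Psi\cup\{(y,r)\})-H(\Psi)]\bigr)^2\,\d y\,\G(\d r),
\]
to the full marked process $\Psi=\{(\xi_n,R_n)\}$, computes the add-one cost explicitly via \eqref{g}, restricts the outer integral to $r\in\R^+\setminus C$ and $y\in W_n$, and then uses Jensen's inequality to produce a strictly positive lower bound of order $|W_n|_d$. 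This route bypasses the factorisation issue completely.
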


\begin{proof}
Suppose that $x\in W_n$ and $u\in\R^d$. If  $x+u\notin W_n$, then $d_{B^d}(x,\partial W_n)\le \|u\|$. Hence, we obtain
$$
|W_n|_d-|\{x\in W_n:d_{B^d}(x,\partial W_n)\le \|u\|\}|_d\le |W_n \cap (W_n-u)|_d\le |W_n|_d.
$$
Thus, \cite[Lemma 10.15 (ii)]{Kallenberg} implies that
\[
\frac{|W_n \cap (W_n-u)|_d}{|W_n|_d} \longn 1 \quad \hbox{for any $u \in \R^d$}.
\]
Therefore Lebesgue's dominated convergence theorem and 
Proposition \ref{p1} yield \eqref{asvareta} provided that
\begin{equation}\label{integrab2.4}
\int_{\R^d} \tau_1(C,u)\,\d u < \infty \quad \hbox{and} \quad
\int_{\R^d} |\tau_2(C,u)|\,\d u < \infty.
\end{equation}

Using \eqref{barF} we have
\begin{align*}
\int_{\R^d} \tau_1(C,u)\,\d u &\leq \int_C \int_{\R^d}\int_{\R^d} 
\frac{f\bigl(d_B(x,rB^d)\bigr)}{h_B\bigl(d_B(x,rB^d),r\bigr)}
\frac{f\bigl(d_B(y,rB^d)\bigr)}{h_B\bigl(d_B(y,rB^d),r\bigr)} \\
&\qquad \times \sqrt{{\bar F}_B\bigl(d_B(x,rB^d)\bigr){\bar F}_B\bigl(d_B(y,rB^d)\bigr)}
\,\d x\,\d y\,\G(\d r) \\
&= \int_C \left(\int_{\R^d} 
\frac{f\bigl(d_B(x,rB^d)\bigr)}{h_B\bigl(d_B(x,rB^d),r\bigr)}
\sqrt{{\bar F}_B\bigl(d_B(x,rB^d)\bigr)}\,\d x\right)^2\,\G(\d r).
\end{align*}
An application of \eqref{refB} shows that
$$
\int_{\R^d}\frac{f\bigl(d_B(x,rB^d)\bigr)}{h_B\bigl(d_B(x,rB^d),r\bigr)}
\sqrt{{\bar F}_B\bigl(d_B(x,rB^d)\bigr)}\, \d x=\int_0^\infty f(t)\sqrt{{\bar F}_B(t)}\, \d t
$$
and thus we obtain
\begin{align*}
\int_{\R^d} \tau_1(C,u)\,\d u &\leq \int_C \left( \int_0^\infty f(t)\sqrt{{\bar F}_B(t)}\,\d t\right)^2\,\G(\d r) \\
&\leq \G(C) \left( \int_0^\infty f(t)\e^{-2ct}\,\d t\right)^2 < \infty,
\end{align*}
where we use that ${\bar F}_B(t)\le \exp\{-4ct\}$ and assumption \eqref{assumption}.

In order to show that 
$$\int_{\R^d} |\tau_2(C,u)|\,\d u < \infty,$$
we first rewrite $q(u;x_1,x_2,r_1,r_2)$ as the difference of two non-negative terms, that is, $q=q_1-q_2$ with
\begin{align*}
q_1(u;x_1,x_2,r_1,r_2) &:= {\bar F}_B^{(2)}\bigl(u;d_B(x_1,B(\o,r_1)),d_B(x_2,B(\o,r_2))\bigr) \\
&\qquad - {\bar F}_B\bigl(d_B(x_1,B(\o,r_1))\bigr){\bar F}_B\bigl(d_B(x_2,B(\o,r_2))\bigr)  ,
\end{align*}
which is non-negative by \eqref{FB2lowerestimate}, and 
\begin{align*}
&q_2(u;x_1,x_2,r_1,r_2) := {\bar F}_B^{(2)}\bigl(u;d_B(x_1,B(\o,r_1)),d_B(x_2,B(\o,r_2))\bigr) \\
&\qquad \times \Bigl(1-\I\bigl\{d_B(x_2,B(u,r_2))>d_B(x_1,B(\o,r_1))\bigr\}
\I\bigl\{d_B(x_1,B(-u,r_1))>d_B(x_2,B(\o,r_2))\bigr\}\Bigr) ,
\end{align*}
for $u,x_1,x_2\in\R^d$ and $r_1,r_2\in\R^+$. 
Using \eqref{vor2.6}, \eqref{barF} and the inequality $1-\e^{-a} \le a$, for $a \ge 0$, we get
\begin{align*}
q_1(u;x_1,x_2,r_1,r_2) &\le  {\bar F}_B^{(2)}\bigl(u;d_B(x_1,B(\o,r_1)),d_B(x_2,B(\o,r_2))\bigr)\\
&\qquad \times\Bigl(
1-\exp\bigl\{-\gamma\BE \kappa_B(u;d_B(x_1,B(\o,r_1)),d_B(x_2,B(\o,r_2)),R)\bigr\}\Bigr)\\
&\leq \sqrt{{\bar F}_B\bigl(d_B(x_1,B(\o,r_1))\bigr){\bar F}_B\bigl(d_B(x_2,B(\o,r_2))\bigr)} \\
&\qquad \times \gamma\BE \kappa_B\bigl(u;d_B(x_1,B(\o,r_1)),d_B(x_2,B(\o,r_2)),R\bigr).
\end{align*}
Moreover, the inequality $1-(1-a)(1-b) \leq a+b$, for $a,b \ge 0$, and again \eqref{barF} imply that 
\begin{align*}
q_2(u;x_1,x_2,r_1,r_2) &\le \sqrt{{\bar F}_B\bigl(d_B(x_1,B(\o,r_1))\bigr){\bar F}_B\bigl(d_B(x_2,B(\o,r_2))\bigr)} \\
&\qquad \times \Bigl(\I\bigl\{d_B(x_2,B(u,r_2)) \le d_B(x_1,B(\o,r_1))\bigr\} \\
&\qquad\qquad + \I\bigl\{d_B(x_1,B(-u,r_1)) \le d_B(x_2,B(\o,r_2))\bigr\}\Bigr).
\end{align*}
Combining these bounds, we arrive at
\begin{align*}
\int_{\R^d} |\tau_2(C,u)|\,\d u &\leq \int_C\int_C\int_{\R^d}\int_{\R^d}\int_{\R^d}
\frac{f\bigl(d_B(x_1,B(\o,r_1))\bigr)}{h_B\bigl(d_B(x_1,B(\o,r_1)),r_1\bigr)}
\frac{f\bigl(d_B(x_2,B(\o,r_2))\bigr)}{h_B\bigl(d_B(x_2,B(\o,r_2)),r_2\bigr)} \\
&\qquad \times \sqrt{{\bar F}_B\bigl(d_B(x_1,B(\o,r_1))\bigr){\bar F}_B\bigl(d_B(x_2,B(\o,r_2))\bigr)} \\
&\qquad \times \Bigl[\gamma\BE \kappa_B\bigl(u;d_B(x_1,B(\o,r_1)),d_B(x_2,B(\o,r_2)),R\bigr) \\
&\qquad\qquad + \I\bigl\{d_B(x_2,B(u,r_2)) \le d_B(x_1,B(\o,r_1))\bigr\}\\
&\qquad\qquad + \I\bigl\{d_B(x_1,B(-u,r_1)) \le d_B(x_2,B(\o,r_2))\bigr\}\Bigr] \\
&\qquad \times \,\d x_1\, \d x_2\,\d u\,\G(\d r_1)\,\G(\d r_2).
\end{align*}
The preceding expression splits naturally into three summands which will be bounded from above separately. For the first bound, we observe that  
by Fubini's theorem 
\[
\BE \int_{\R^d} \kappa_B(u;s_1,s_2,R)\,\d u = \BE |B_{s_1,R}|_d |B_{s_2,R}|_d.
\]
Then we apply \eqref{refB} to get
\begin{align*}
&\int_C\int_C \int_{\R^d}\int_{\R^d}\int_{\R^d} 
\frac{f\bigl(d_B(x_1,B(\o,r_1))\bigr)}{h_B\bigl(d_B(x_1,B(\o,r_1)),r_1\bigr)}
\frac{f\bigl(d_B(x_2,B(\o,r_2))\bigr)}{h_B\bigl(d_B(x_2,B(\o,r_2)),r_2\bigr)} \\
&\qquad \times \sqrt{{\bar F}_B\bigl(d_B(x_1,B(\o,r_1))\bigr){\bar F}_B\bigl(d_B(x_2,B(\o,r_2))\bigr)} \\
&\qquad \times \gamma\BE \kappa_B\bigl(u;d_B(x_1,B(\o,r_1)),d_B(x_2,B(\o,r_2)),R\bigr) \\
&\qquad \times 
\,\d x_1\, \d x_2\,\d u\,\G(\d r_1)\,\G(\d r_2)\\
&=\gamma\,\G(C)^2\int_0^\infty\int_0^\infty f(t_1)\sqrt{{\bar F}_B(t_1)}f(t_2)\sqrt{{\bar F}_B(t_2)} \,\BE |B_{t_1,R}|_d |B_{t_2,R}|_d\,\d t_1\,\d t_2.
\end{align*}
Choose $c_B>0$ such that $B\subset c_BB^d$. Then $| B_{t,R}|_d\le \kappa_d(c_Bt+R)^d$ and hence the Cauchy-Schwarz inequality, the convexity of $s\mapsto s^p$, $p\ge 1$, and $\sqrt{a+b}\le \sqrt{a}+\sqrt{b}$, $a,b\ge 0$, yield that
\begin{align*}
\BE | B_{t_1,R}|_d| B_{t_2,R}|_d &\le c_1 \sqrt{\BE (c_Bt_1+R)^{2d}}\sqrt{\BE (c_Bt_2+R)^{2d}}\\
&\le c_2\left(c_B^dt_1^d+\sqrt{\BE R^{2d}}\right)\left(c_B^dt_2^d+\sqrt{\BE R^{2d}}\right)\\
&\le c_3(1+t_1^d)(1+t_2^d),
\end{align*}
where $c_1,c_2,c_3$ denote finite constants independent of the expectation or $t_1,t_2$. From this and  \eqref{assumption} it follows again that the first summand is finite. 

Since $d_B\bigl(x_2,B(u,r_2)\bigr) \le t_1$ if and only if $u\in x_2+B_{t_1,r_2}$, applying Fubini's theorem and  \eqref{refB} (twice) we obtain for the second summand that 
\begin{align*}
&\int_C\int_C\int_{\R^d}\int_{\R^d}\int_{\R^d}
 \frac{f\bigl(d_B(x_1,B(\o,r_1))\bigr)}{h_B\bigl(d_B(x_1,B(\o,r_1)),r_1\bigr)}
 \frac{f\bigl(d_B(x_2,B(\o,r_2))\bigr)}{h_B\bigl(d_B(x_2,B(\o,r_2)),r_2\bigr)} \\
&\qquad \times \sqrt{{\bar F}_B\bigl(d_B(x_1,B(\o,r_1))\bigr){\bar F}_B\bigl(d_B(x_2,B(\o,r_2))\bigr)} \\
&\qquad \times \I\bigl\{d_B(x_2,B(u,r_2)) \le d_B(x_1,B(\o,r_1))\bigr\}\\
&\qquad \times \,\d x_1\, \d x_2\,\d u\,\G(\d r_1)\,\G(\d r_2)\\
&=\int_C\int_C\int_0^\infty\int_0^\infty f(t_1)\sqrt{\bar{F}_B(t_1)}f(t_2)\sqrt{\bar{F}_B(t_2)}
|B_{t_1,r_2}|_d\, \d t_1\, \d t_2\, \G(\d r_1)\,\G(\d r_2),
\end{align*}
which is finite by the same reasoning as above. 

The third summand can be treated in exactly the same way.

To prove positivity of the asymptotic variance we use the
fact that the variance of any square-integrable function
$H(\Psi)$ of the Poisson process
$\Psi:=\{(\xi_n,R_n):n\ge 1\}$ satisfies the inequality
\begin{align*}
\var H(\Psi)\ge \gamma 
 \int^\infty_0\int_{\R^d}
\big(\BE \left[H(\Psi\cup\{(y,r)\})-H(\Psi)\right]\big)^2\,\d y\,\G(\d r);
\end{align*}
see, e.g.,\ \cite[Theorem 4.2]{LaPe11}. In our case this means
that
\begin{align}\label{vun}
\var \eta_W(C)\ge \gamma \int^\infty_0\int_{\R^d}\tilde h(y,r)^2 \,\d y\,\G(\d r),
\end{align}
where
\begin{align*}
\tilde h(y,r):=\,&
\BE\int_W \bigl[g\bigl(d_B(x,Z\cup B(y,r)),r_B(x,Z\cup B(y,r))\bigr)-
g\bigl(d_B(x,Z),r_B(x,Z)\bigr)\bigr]\,\d x\\
=\,&\BE\int_W \I\bigl\{d_B(x,B(y,r))<d_B(x,Z)\bigr\}
\bigl[g\bigl(d_B(x,B(y,r)),r\bigr)-g\bigl(d_B(x,Z),r_B(x,Z)\bigr)\bigr]\,\d x\\
=\,&\BE\int_W \I\bigl\{d_B(\o,B(y-x,r))<d_B(\o,Z)\bigr\}\\
&\qquad\quad \times \bigl[g\bigl(d_B(\o,B(y-x,r)),r\bigr)-g\bigl(d_B(\o,Z),r_B(\o,Z)\bigr)\bigr]\,\d x .
\end{align*}
Here the last identity follows from the stationarity of $Z$ and $g$ is as defined in \eqref{shortg}.
By \eqref{g},
\begin{align*}
\tilde h(y,r)&=\gamma \int_W\int^\infty_0\int^\infty_0 \I\bigl\{d_B(\o,B(y-x,r))<t\bigr\}\\
&\qquad\quad\times \bigl[g\bigl(d_B(\o,B(y-x,r)),r\bigr)-g(t,s)\bigr]
h_B(t,s){\bar F}_B(t)\,\d t\,\G(\d s)\,\d x.
\end{align*}
Assume now that $0<\G(C)<1$ and let $C':=\R^+\setminus C$.
Recalling the definition \eqref{shortg} of $g$, we obtain from \eqref{vun} that
\begin{align*}
\var \eta_W(C)\ge \gamma
\int^\infty_0\int_{\R^d} h^*(y,r)^2
\I\{r\in C',y\in W\} \,\d y\,\G(\d r),
\end{align*}
where 
\begin{align*}
h^*(y,r):=\,&\gamma\int_W\int^\infty_0\int^\infty_0 \I\bigl\{d_B(\o,B(y-x,r))<t\bigr\}
g(t,s)h_B(t,s){\bar F}_B(t)\,\d t\,\G(\d s)\,\d x\\
=\,&\gamma\,\G(C)\int_W\int^\infty_0 \I\bigl\{d_B(\o,B(y-x,r))<t\bigr\}
f(t){\bar F}_B(t)\,\d t\,\d x.
\end{align*}
Applying Jensen's inequality with the normalization of
$\I\{r\in C',y\in W\} \,\d y\,\G(\d r)$, we get
\begin{align*}
\var \eta_W(C)\ge \frac{\gamma}{\G(C')|W|_d}\left(\int_{C'}\int_{W} h^*(y,r)
\,\d y\,\G(\d r)\right)^2.
\end{align*}
Letting $a:=\gamma^3 \G(C)^2/\G(C')>0$ we obtain that
\begin{align*}
&\frac{\var \eta_W(C)}{|W|_d}\\
&\quad\ge 
\frac{a}{|W|^2_d}\left(\int_{C'}\int_{W}\int_W\int^\infty_0 \I\bigl\{d_B(\o,B(y-x,r))<t\bigr\}
f(t){\bar F}_B(t)\,\d t\,\d x \,\d y\,\G(\d r)\right)^2\\
&\quad=
\frac{a}{|W|^2_d}\left(\int_{C'}\int_{\R^d}\int^\infty_0 |W\cap(W-y)|_d\I\bigl\{d_B(\o,B(y,r))<t\bigr\}
f(t){\bar F}_B(t)\,\d t \,\d y\,\G(\d r)\right)^2.
\end{align*}
Hence it is sufficient to show that
\begin{align*}
\int_{C'}\int^\infty_0 \left(\int_{\R^d}\I\bigl\{d_B(\o,B(y,r))<t\bigr\}
\,\d y\right)\, f(t){\bar F}_B(t) \,\d t  \,\G(\d r)>0.
\end{align*}
This is true, since the inner integral is positive
for all $r,t>0$ and since both  
$\int^\infty_0f(t){\bar F}_B(t)\,\d t$ and $\G(C')$ are positive.
\end{proof}

\begin{remark}\rm
The assumption \eqref{assumption} is slightly stronger than \eqref{beta}. 
\end{remark}

\begin{remark}\rm
Let $\widehat{\G}_n(C)$ be given by \eqref{estimator2} with $W=W_n$. 
Theorem \ref{asvar} implies that $\widehat{\G}_n(C)$ is asymptotically weakly 
consistent. Indeed, \eqref{Eeta} and
\[
\frac{\var \eta_{W_n}(C)}{|W_n|_d^2} \longn 0
\]
ensure that ${\eta_{W_n}(C)}/{|W_n|_d}$ converges to $\gamma\,\beta\,\G(C)$
in probability as $n\to\infty$. Especially,
\begin{equation}
\frac{\eta_{W_n}(\R^+)}{|W_n|_d} \longn \gamma\,\beta \quad \hbox{in probability}.
\label{etaW2}
\end{equation}
Hence, by the continuous mapping theorem, ${\eta_{W_n}(C)}/{\eta_W(\R^+)}$ 
converges to $\G(C)$ in probability as $n \to \infty$. This is in accordance
with the following proposition which even shows that $\widehat{\G}_n(C)$ is asymptotically 
strongly consistent.
\end{remark}

\begin{proposition}\label{ascons} 
For any Borel set $C\subset \R^+$, we 
have $\widehat{\G}_n(C) \longn \G(C)$ $\BP$-a.s.
\end{proposition}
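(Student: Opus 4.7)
The plan is to reduce the claim to an application of the multi\-dimensional ergodic theorem for the stationary random field obtained from the integrand in \eqref{etaW}. To this end, define the stationary random field
\begin{align*}
\psi_C(x):=\I\{r_B(x,Z)\in C\}\,f\bigl(d_B(x,Z)\bigr)\,h_B\bigl(d_B(x,Z),r_B(x,Z)\bigr)^{-1},\qquad x\in\R^d,
\end{align*}
with the convention $0/0:=0$, so that $\eta_{W_n}(C)=\int_{W_n}\psi_C(x)\,\d x$. Since $Z$ and the marked Poisson process $\Psi=\{(\xi_n,R_n):n\ge 1\}$ are stationary under translations of $\R^d$, the field $(\psi_C(x))_{x\in\R^d}$ is stationary as well, and by \eqref{Eeta} applied to the unit cube we have $\BE\psi_C(\o)=\gamma\beta\,\G(C)<\infty$.

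Next I would appeal to the fact that the underlying Poisson point process $\Phi$ is mixing with respect to spatial translations (a standard consequence of the independence property of a Poisson process on disjoint Borel sets); hence the translation action is ergodic for the distribution of $\Psi$, and therefore for any measurable functional of $\Psi$, in particular for $\psi_C$. An application of the spatial ergodic theorem for convex averaging sequences $(W_n)$ whose inradius tends to infinity (see, e.g., Nguyen--Zessin or \cite[Theorem 10.17]{Kallenberg}) then yields
\begin{align*}
\frac{\eta_{W_n}(C)}{|W_n|_d}=\frac{1}{|W_n|_d}\int_{W_n}\psi_C(x)\,\d x\ \longn\ \BE\psi_C(\o)=\gamma\beta\,\G(C)\qquad \BP\text{-a.s.}
\end{align*}
Specializing to $C=\R^+$, we obtain in the same way that $\eta_{W_n}(\R^+)/|W_n|_d\longn \gamma\beta$ $\BP$-a.s.

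Since $\gamma\beta>0$ by \eqref{beta} and our standing assumption on $\gamma$, the continuous mapping of the pair through the ratio gives
\begin{align*}
\widehat{\G}_n(C)=\frac{\eta_{W_n}(C)/|W_n|_d}{\eta_{W_n}(\R^+)/|W_n|_d}\ \longn\ \frac{\gamma\beta\,\G(C)}{\gamma\beta}=\G(C)\qquad \BP\text{-a.s.},
\end{align*}
which proves the claim. The only subtle point is the invocation of the multiparameter ergodic theorem: it requires ergodicity of the driving flow (handled by the mixing property of the Poisson process) and an $L^1$-integrable stationary field (handled by $\BE\psi_C(\o)=\gamma\beta\G(C)<\infty$); the convex averaging assumption on $(W_n)$ is exactly the one already imposed before Theorem \ref{asvar}. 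No further calculation is needed.
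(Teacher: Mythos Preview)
Your proof is correct and follows essentially the same route as the paper: both arguments reduce the claim to the spatial ergodic theorem, using ergodicity of the underlying Poisson process (equivalently, of the Boolean model) and the $L^1$-integrability furnished by \eqref{Eeta}, and then pass to the ratio. The only cosmetic difference is that the paper phrases things in terms of the translation-covariant random measure $W\mapsto\eta_W(C)$ and invokes \cite[Corollary 10.19]{Kallenberg}, whereas you work with the stationary random field $\psi_C$ and cite the field version of the ergodic theorem; since $\eta_W(C)=\int_W\psi_C(x)\,\d x$, these are two sides of the same coin.
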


\begin{proof} 
The mapping $W\mapsto \eta_W(C)$ defined by \eqref{etaW} is a random measure
on $\R^d$ depending on the Boolean model $Z$
in a translation-invariant way. As the Boolean model is
ergodic (see \cite[Theorem 9.3.5]{SW08}) we can
apply the spatial ergodic theorem (see \cite[Corollary 10.19]{Kallenberg})
to conclude that
$$
\lim_{n\to\infty}|W_n|_d^{-1}\eta_{W_n}(C) = \BE\eta_{[0,1]^d}(C)=\gamma\,\beta\,\G(C)\quad \BP\text{-a.s.}
$$
Applying this  to the numerator as well as to the denominator in 
\eqref{estimator2}, we obtain the desired result.
\end{proof}

\section{Asymptotic normality} \label{sec:normality}
\setcounter{equation}{0}

In this section we study the asymptotic normality of the ratio-unbiased
estimator \eqref{estimator2} for the radius distribution $\G$ of our stationary
Boolean model $Z$ with spherical grains. 
The proof will be based on approximation by $m$-dependent random fields. 
This idea comes from \cite{HM99}, where the same technique was used to prove 
the central limit theorem for random measures which are associated with the Boolean model in an additive way. 
In contrast to \cite{HM99}, the contribution of an individual grain to the random 
measure $A\mapsto\eta_A(C)$ is not determined by the grain alone, but 
does depend on a random number of other grains in a non-trivial manner. 
Therefore the results of \cite{HM99} do not apply in our setting.

We consider, for $n\in\N$ and a Borel set $C\subset \R^+$, the estimator
\[
\widehat{\G}_n(C) = \frac{\eta_{W_n}(C)}{\eta_{W_n}(\R^d)},
\]
where $W_n := [-n,n)^d$ and $\eta_{W_n}$ is given in \eqref{etaW}.
First we concentrate on the asymptotic normality of the numerator $\eta_{W_n}(C)$. In addition to \eqref{beta}, 
we shall need the 
integrability condition
\begin{equation}
\int_0^\infty (1+t^d) f(t)\,\d t < \infty,
\label{assumptionf}
\end{equation}
which is more restrictive than \eqref{assumption}. 

\begin{theorem} \label{clt1}
Assume that \eqref{beta} and  \eqref{assumptionf} are fulfilled. 
Then, for any Borel set $C\subset \R^+$,  
\[
\sqrt{|W_n|_d}\left(\frac{\eta_{W_n}(C)}{|W_n|_d} - \gamma\,\beta\,\G(C)
\right) \longnd N\bigl(0,\sigma^2(C)\bigr),
\]
where $\sigma^2(C)$ is given by \eqref{asvarC}.
\end{theorem}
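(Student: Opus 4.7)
The plan is to follow the scheme of \cite{HM99}: decompose $W_n$ into unit blocks, replace each block's contribution by a spatially and radius-truncated version which becomes $m$-dependent, apply a central limit theorem for stationary $m$-dependent random fields on $\Z^d$, and let $m \to \infty$ to recover the variance $\sigma^2(C)$ from Theorem \ref{asvar}. The essential new feature, already highlighted in the text, is that for $x \in Q_i$ the quantities $d_B(x,Z)$ and $r_B(x,Z)$ depend on arbitrarily far Poisson points through the nearest-ball mechanism, so the truncation must be handled more carefully than for the additive functionals in \cite{HM99}.

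Concretely, with $W_n = [-n,n)^d$ I would partition into unit cubes $Q_i := i + [0,1)^d$ indexed by $i \in I_n := W_n \cap \Z^d$, giving $\eta_{W_n}(C) = \sum_{i \in I_n} Y_i$ with $Y_i := \eta_{Q_i}(C)$. For a large truncation level $m$ set $Z^{(m,i)} := \bigcup\{B(\xi_n,R_n) : \xi_n \in Q_i + 3mB^d,\, R_n \le m\}$ and
\[
Y_i^{(m)} := \int_{Q_i} \I\bigl\{d_B(x,Z^{(m,i)}) \le m\bigr\}\, g\bigl(d_B(x,Z^{(m,i)}), r_B(x,Z^{(m,i)})\bigr)\,\d x,
\]
where $g$ is the integrand from \eqref{shortg}. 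By construction $Y_i^{(m)}$ depends only on the marked Poisson process restricted to $(Q_i + 3mB^d) \times [0,m]$, so $(Y_i^{(m)})_{i \in \Z^d}$ is a strictly stationary $m'$-dependent random field for some $m' = c_B m$ depending only on $B$.

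For each fixed $m$, a standard CLT for stationary $m$-dependent random fields on $\Z^d$ (of Bolthausen type) then yields
\[
|W_n|_d^{-1/2}\sum_{i \in I_n}\bigl(Y_i^{(m)} - \BE Y_i^{(m)}\bigr) \longnd N\bigl(0,\sigma_m^2\bigr),
\]
where $\sigma_m^2 := \lim_{n \to \infty}|W_n|_d^{-1}\var\bigl(\sum_{i \in I_n} Y_i^{(m)}\bigr)$ is finite and admits an analogue of the representation from Proposition \ref{p1}. The Lyapunov $(2+\delta)$-moment needed here is available because the truncations $d_B \le m$ and $R_n \le m$ give a deterministic envelope for the integrand, and \eqref{assumptionf} ensures the required integrability. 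To lift this to $\eta_{W_n}(C)$ itself I would apply the standard triangular-array argument: it suffices to establish (a) $\sigma_m^2 \longm \sigma^2(C)$, which follows from Theorem \ref{asvar} once the truncation is controlled in $L^2$, and (b) $\limsup_{m \to \infty}\limsup_{n \to \infty} |W_n|_d^{-1}\var\bigl(\sum_{i \in I_n}(Y_i - Y_i^{(m)})\bigr) = 0$. Centering by $\BE \eta_{W_n}(C) = \gamma\,\beta\,|W_n|_d\,\G(C)$ from \eqref{Eeta} then delivers the claimed convergence.

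The main obstacle is the $L^2$ truncation estimate (b). The difference $Y_i - Y_i^{(m)}$ splits into a geometric part, capturing grains with centers outside $Q_i + 3mB^d$ that could still be the nearest ball for some $x \in Q_i$, and a radius part, capturing grains with $R_n > m$. The geometric error is controlled by the uniform decay $\bar F_B(m) \le \exp\{-4cm\}$ implicit in \eqref{assumption}, while the radius tail requires $\BE R^d \I\{R > m\} \to 0$ (finite thanks to \eqref{R2d}) together with $\int_m^\infty(1+t^d)f(t)\,\d t \to 0$, which is precisely why the stronger integrability \eqref{assumptionf} replaces \eqref{assumption} here. The covariance bound itself then follows by applying Lemma \ref{l1} to the error terms and integrating along the same lines as in the proof of Theorem \ref{asvar}.
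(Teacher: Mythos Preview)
Your scheme coincides with the paper's: unit-cube decomposition of $W_n$, spatial truncation to an $m$-dependent stationary field, a CLT for $m$-dependent fields, and the Billingsley triangular-array device with the $L^2$ remainder controlled through Lemma~\ref{l1}. The paper's implementation is slightly leaner --- it truncates only the centres (replacing $Z$ by $Z(F_z)$ with $F_z=E_z\oplus[-m,m)^d$), omitting your radius cutoff $R_n\le m$ and distance indicator $\I\{d_B\le m\}$, and invokes Ros\'en's CLT with a direct second-moment bound from Lemma~\ref{l1}; note that your ``deterministic envelope for the integrand'' does not literally hold (neither $f$ nor $1/h_B$ need be bounded on $(0,m]\times[0,m]$ under the stated hypotheses), though the required moments are available by the paper's computation or by bounding $Y_i^{(m)}$ against a Poisson count times $\int_0^\infty f(t)\,\d t$.
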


\begin{proof}
We fix a Borel set $C \subset \R^+$ and skip the dependence on $C$ in
the notation.
Let $E_z := [0,1)^d + z$ for $z \in I_n := \{-n,\ldots,n-1\}^d$.
Then
\[
\eta_{W_n} = \sum_{z \in I_n} \eta_{E_z} = \sum_{z \in I_n} \int_{E_z} 
g\bigl(d_B(x,Z),r_B(x,Z)\bigr)\,\d x,
\]
where $g$ is given by \eqref{shortg}.
For some fixed integer $m$, we put $F_z := E_z \oplus [-m,m)^d$.
We decompose $\eta_{E_z}$ into two random variables 
\[
\eta_z^{(m)} := \int_{E_z} g\bigl(d_B(x,Z(F_z)),r_B(x,Z(F_z))\bigr)\,\d x
\]
and $\tilde\eta_z^{(m)} := \eta_{E_z} - \eta_z^{(m)}$. Let $\eta_{W_n}^{(m)} := 
\sum_{z \in I_n} \eta_z^{(m)}$ and $\tilde\eta_{W_n}^{(m)} := \sum_{z \in I_n}
\tilde\eta_z^{(m)}$ so that $\eta_{W_n}=\eta_{W_n}^{(m)}+\tilde\eta_{W_n}^{(m)}$. 
It is easily seen that 
$\{\eta_u^{(m)}: u \in U\}$ and $\{\eta_v^{(m)}: v \in V\}$ are independent
whenever $U,V \subset \Z^d$ are such that 
$\|u-v\|_\infty > 2m$ for each $u \in U$ and $v \in V$.
Thus, the random variables $\eta_z^{(m)}$, for 
$z \in \Z^d$, constitute a stationary $(2m)$-dependent random field (cf.~\cite[Section 4.3.1]{Heinrich2013}). 
The variance of $\eta_{W_n}^{(m)}$ is
\[
\var \eta_{W_n}^{(m)} = \var \sum_{z \in I_n} \eta_z^{(m)} = \sum_{z_1 \in I_n}
\sum_{z_2 \in I_n} \cov(\eta_{z_1}^{(m)},\eta_{z_2}^{(m)})
= \sum_{z \in I_n-I_n} N_n(z)\cov(\eta_\o^{(m)},\eta_z^{(m)}),
\]
where $N_n(z)$ is the cardinality of $\{(z_1,z_2) \in I_n \times I_n: z_2-z_1=z\}$, which may be
bounded by $|W_n|_d = (2n)^d$ and $\lim_{n \to \infty} N_n(z)/|W_n|_d = 1$ for any $z \in \Z^d$.
We define
\[
(\sigma_n^{(m)})^2 := \frac{\var \eta_{W_n}^{(m)}}{|W_n|_d}.
\]
Since $\cov(\eta_\o^{(m)},\eta_z^{(m)}) = 0$ for $\|z\|>2m$, the limit of $(\sigma_n^{(m)})^2$ 
as $n\to\infty$ 
exists and satisfies
\begin{equation}
(\sigma^{(m)})^2 := \lim_{n \to \infty} (\sigma_n^{(m)})^2 = 
\sum_{z \in \{-2m,\ldots,2m\}^d} \cov\bigl(\eta_\o^{(m)},\eta_z^{(m)}\bigr).
\label{sigman}
\end{equation}
Next we show that $\BE (\eta_\o^{(m)})^2< \infty$. We put $A:=[-m,m+1)^d$, hence
$$
\eta_\o^{(m)}=\int_{E_\o}g\bigl(d_B(x,Z(A)),r_B(x,Z(A))\bigr)\,\d x.
$$
Proceeding as in the proof of Proposition \ref{p1} and bounding $\bar{F}_B^{A,A}(\cdot)$ as well as $(1-\mathbf{1}\{\cdot\}
\mathbf{1}\{\cdot\})$ by $1$, we get
\begin{align*}
\BE (\eta_\o^{(m)})^2&\le \gamma \int_{E_\o}\int_{E_\o}\int_0^\infty\int_A
g\bigl(d_B(y,B(x_1,r)),r\bigr)g\bigl(d_B(y,B(x_2,r)),r\bigr)\, \d y\, \mathbb{G}(\d r)\, \d x_1\, \d x_2\\
&\quad +\gamma^2
\int_{E_\o}\int_{E_\o}\int_0^\infty\int_0^\infty\int_A\int_A
g\bigl(d_B(y_1,B(x_1,r_1)),r_1\bigr)\\
&\qquad\quad\, \times g\bigl(d_B(y_2,B(x_2,r_2)),r_2\bigr)\,\d y_1\, \d y_2\, \mathbb{G}(\d r_1)\,\mathbb{G}(\d r_2)\,
\, \d x_1\, \d x_2.
\end{align*}
The right-hand side increases if $A$ is replaced by $\R^d$. Arguing then as in the proof of Theorem \ref{asvar}, we obtain
$$
\BE (\eta_\o^{(m)})^2\le \gamma|E_\o|_d\left(\int_0^\infty f(t)\, \d t\right)^2\mathbb{G}(C)+
 \left(\gamma|E_\o|_d\int_0^\infty f(t)\, \d t\, \mathbb{G}(C)\right)^2<\infty.
$$
Therefore, the central limit theorem for
stationary $m$-dependent random fields (see, e.g., \cite{Rosen}) yields that
\[
\frac{1}{\sqrt{|W_n|_d}} \sum_{z \in I_n} \left(\eta_z^{(m)}-\BE\eta_z^{(m)}\right)  \longnd N(0,(\sigma^{(m)})^2).
\]
In view of \cite[Theorem 3.2]{Billingsley}, 
it remains to verify that
\begin{equation}
\lim_{m \to \infty} \sigma^{(m)} = \sigma(C)
\label{sigmam}
\end{equation}
and
\begin{equation}
\lim_{m \to \infty} \limsup_{n \to \infty} \BP\left(\frac{1}{\sqrt{|W_n|_d}}\Bigl|\sum_{z \in I_n} (\tilde\eta_z^{(m)}-\BE\tilde\eta_z^{(m)})\Bigr| \geq \varepsilon\right)=0
\quad \hbox{for any $\varepsilon>0$}.
\label{remainder}
\end{equation}
Define
\[\sigma_n^2 := \frac{\var\eta_{W_n}}{|W_n|_d}.\]
Then
\[
|\sigma(C) - \sigma^{(m)}| \leq |\sigma(C) - \sigma_n| + |\sigma_n - \sigma_n^{(m)}| + |\sigma_n^{(m)} - \sigma^{(m)}|.
\]
The first term goes to zero as $n \to \infty$ by Theorem \ref{asvar}, the last term goes to zero as $n \to \infty$ as well, 
for any $m \in \N$, by \eqref{sigman}.
By  Minkowski's inequality, the middle term can be bounded as
\[
|\sigma_n - \sigma_n^{(m)}| \leq \frac{1}{\sqrt{|W_n|_d}} \sqrt{\var \tilde\eta_{W_n}^{(m)}}. 
\]
Therefore, \eqref{sigmam} follows if we can show that
\begin{equation}
\sup_{n \in \N} \frac{1}{|W_n|_d} \var \tilde\eta_{W_n}^{(m)} \longm 0.
\label{vartilde}
\end{equation}
By Chebyshev's inequality, \eqref{vartilde} also implies \eqref{remainder}.
The variance in \eqref{vartilde} satisfies
\[
\frac{1}{|W_n|_d} \var \sum_{z \in I_n} \tilde\eta_z^{(m)} 
= \frac{1}{|W_n|_d} \sum_{z \in I_n-I_n} N_n(z)\cov(\tilde\eta_\o^{(m)},\tilde\eta_z^{(m)})
\leq \sum_{z \in \Z^d} |\cov \bigl(\tilde\eta_\o^{(m)},\tilde\eta_z^{(m)}\bigr)|.
\]
Therefore, the proof will be finished when we show that
\[
\sum_{z \in \Z^d} |\cov \bigl(\tilde\eta_\o^{(m)},\tilde\eta_z^{(m)}\bigr)| \longm 0.
\]

Consider a fixed $z \in \Z^d$. Then the covariance can be written as
\begin{align*}
&\cov \bigl(\tilde\eta_\o^{(m)},\tilde\eta_z^{(m)}\bigr) =
\cov\bigl(\eta_{E_\o},\eta_{E_z}\bigr)-\cov\bigl(\eta_\o^{(m)},\eta_{E_z}\bigr)
-\cov\bigl(\eta_{E_\o},\eta_z^{(m)}\bigr)+\cov\bigl(\eta_\o^{(m)},\eta_z^{(m)}\bigr) \\
&\quad = \int_{E_\o}\int_{E_z} \left[c_{\R^d,\R^d}(x_1,x_2)-c_{F_\o,\R^d}(x_1,x_2)-c_{\R^d,F_z}(x_1,x_2)+c_{F_\o,F_z}(x_1,x_2)\right]
\,\d x_2\,\d x_1,
\end{align*}
where, for Borel sets $A_1,A_2 \subset \R^d$ and $x_1,x_2\in\R^d$,
\[
c_{A_1,A_2}(x_1,x_2) := \cov\Bigl(g\bigl(d_B(x_1,Z(A_1)),r_B(x_1,Z(A_1))\bigr),g\bigl(d_B(x_2,Z(A_2)),r_B(x_2,Z(A_2))\bigr)\Bigr)
\]
is expressed in Lemma \ref{l1} as
\begin{align*}
c_{A_1,A_2}(x_1,x_2) &= \gamma\int_0^\infty\int_{\R^d} 
g\bigl(d_B(x_1,B(y,r)),r\bigr)g\bigl(d_B(x_2,B(y,r)),r\bigr)I_1(A_1,A_2)\,\d y\,\G(\d r)\\
&\quad + \gamma^2 \int_0^\infty \int_0^\infty \int_{\R^d} \int_{\R^d} 
g\bigl(d_B(x_1,B(y_1,r_1)),r_1\bigr)g\bigl(d_B(x_2,B(y_2,r_2)),r_2\bigr)\\
&\qquad \times I_2(A_1,A_2)\,\d y_1\,\d y_2\,\G(\d r_1)\,\G(\d r_2).
\end{align*}
Here we skip the arguments $x_1,x_2,y,r$, respectively $x_1,x_2,y_1,y_2,r_1,r_2$, of  the functions $I_1(A_1,A_2)$ and
$I_2(A_1,A_2)$,  which were defined before Lemma \ref{l1}. 
We shall treat both parts of $c_{A_1,A_2}(x_1,x_2)$ separately. Our aim is to prove that 
\begin{align*}
S_1 &:= \sum_{z \in \Z^d} \int_{E_\o} \int_{E_z}
\int_0^\infty\int_{\R^d} 
g\bigl(d_B(x_1,B(y,r)),r\bigr)g\bigl(d_B(x_2,B(y,r)),r\bigr) \\
&\qquad\times\left|I_1(\R^d,\R^d)-I_1(F_\o,\R^d)-I_1(\R^d,F_z)+I_1(F_\o,F_z)\right|\,\d y\,\G(\d r)\,\d x_2
\,\d x_1
\end{align*}
and
\begin{align*}
S_2 &:= \sum_{z \in \Z^d} \int_{E_\o} \int_{E_z}
\int_0^\infty\int_0^\infty\int_{\R^d}\int_{\R^d} 
g\bigl(d_B(x_1,B(y_1,r_1)),r_1\bigr)g\bigl(d_B(x_2,B(y_2,r_2)),r_2\bigr) \\
&\qquad\times\left|I_2(\R^d,\R^d)-I_2(F_\o,\R^d)-I_2(\R^d,F_z)+I_2(F_\o,F_z)\right|\,\d y_1\,\d y_2\,\G(\d r_1)\,
\,\G(\d r_2)\,\d x_2\,\d x_1
\end{align*}
tend to zero as $m \to \infty$. Observe that $S_1,S_2$ depend on $m$ via the dependence of $F_\o$, $F_z$  on $m$. 

First, we consider $S_1$. We rewrite
\begin{align*}
&I_1(\R^d,\R^d)-I_1(F_\o,\R^d)-I_1(\R^d,F_z)+I_1(F_\o,F_z) \\
&\quad = \I\{y \in F_\o^c \cap F_z^c\}{\bar F}_B^{\R^d,\R^d}(x_1,x_2;t_1,t_2) 
 + \I\{y \in F_\o \cap F_z^c\}\left({\bar F}_B^{\R^d,\R^d}-{\bar F}_B^{F_\o,\R^d}\right)
(x_1,x_2;t_1,t_2) \\
&\quad\quad
+ \I\{y \in F_\o^c \cap F_z\}\left({\bar F}_B^{\R^d,\R^d}-{\bar F}_B^{\R^d,F_z}\right)(x_1,x_2;t_1,t_2) \\
&\quad\quad + \I\{y \in F_\o \cap F_z\}\left({\bar F}_B^{\R^d,\R^d}-{\bar F}_B^{F_\o,\R^d}-{\bar F}_B^{\R^d,F_z}+{\bar F}_B^{F_\o,F_z}
\right)(x_1,x_2;t_1,t_2)
\end{align*}
with $t_1 = d_B\bigl(x_1,B(y,r)\bigr)$ and $t_2 = d_B\bigl(x_2,B(y,r)\bigr)$.
For notational simplicity, write
\[
\nu_1(x_1,t_1) := \gamma\BE \bigl|(x_1+B_{t_1,R}) \cap F_\o^c\bigr|_d, \quad
\nu_2(x_2,t_2) := \gamma\BE \bigl|(x_2+B_{t_2,R}) \cap F_z^c\bigr|_d
\]
for $x_1,x_2 \in \R^d$ and $t_1,t_2 \in \R^+$. We suppress the dependence on $z$
in $\nu_2(x_2,t_2)$.
From \eqref{empty2exp} and the inequality $1-\e^{-a} \leq a$, for $a \ge 0$, 
we obtain for  $x_1,x_2 \in \R^d$ and $t_1,t_2 \in \R^+$ that
\begin{align}
&\left|\left({\bar F}_B^{\R^d,\R^d}-{\bar F}_B^{F_\o,\R^d}\right)(x_1,x_2;t_1,t_2)\right| =
\left({\bar F}_B^{F_\o,\R^d}-{\bar F}_B^{\R^d,\R^d}\right)(x_1,x_2;t_1,t_2) 
\nonumber \\
&\quad = {\bar F}_B^{F_\o,\R^d}(x_1,x_2;t_1,t_2)\Bigl(1-\exp\left\{-\gamma \BE \bigl|(x_1+B_{t_1,R}) \cap
(x_2+B_{t_2,R})^c \cap F_\o^c\bigr|_d\right\}\Bigr) \nonumber \\
&\quad\leq \nu_1(x_1,t_1). \label{ineq1a}
\end{align}
Analogously,
\begin{equation}
\left|\left({\bar F}_B^{\R^d,\R^d}-{\bar F}_B^{\R^d,F_z}\right)(x_1,x_2;t_1,t_2)\right|
\leq \nu_2(x_2,t_2).
\label{ineq1b}
\end{equation}
Furthermore,
\begin{align}
&\left({\bar F}_B^{\R^d,\R^d}-{\bar F}_B^{F_\o,\R^d}-{\bar F}_B^{\R^d,F_z}+
{\bar  F}_B^{F_\o,F_z}\right)(x_1,x_2;t_1,t_2)= {\bar F}_B^{F_\o,F_z}(x_1,x_2;t_1,t_2)\nonumber\\
&\quad \times \Bigl(1-\exp\left\{-\gamma \BE
\bigl|(x_1+B_{t_1,R}) \cap F_\o^c \cap \bigl((x_2+B_{t_2,R})^c \cup F_z^c\bigr)\bigr|_d\right\}\Bigr) \nonumber \\
&\quad \times \Bigl(1-\exp\left\{-\gamma \BE \bigl|(x_2+B_{t_2,R}) \cap F_z^c \cap 
\bigl((x_1+B_{t_1,R})^c \cup F_\o^c\bigr) 
\bigr|_d\right\}\Bigr) \nonumber  \\
&\quad + {\bar F}_B^{\R^d,\R^d}(x_1,x_2;t_1,t_2)
\Bigl(1-\exp\left\{-\gamma \BE \bigl|(x_1+B_{t_1,R}) \cap (x_2+B_{t_2,R}) \cap F_\o^c \cap
F_z^c\bigr|_d\right\}\Bigr) \label{F4}
\end{align}
gives
\begin{align}
&\left|\left({\bar F}_B^{\R^d,\R^d}-{\bar F}_B^{F_\o,\R^d}-{\bar F}_B^{\R^d,F_z}+{\bar F}_B^{F_\o,F_z}\right)(x_1,x_2;t_1,t_2)\right| \nonumber \\
&\qquad\qquad\qquad\leq \nu_1(x_1,t_1)\nu_2(x_2,t_2) + \sqrt{\nu_1(x_1,t_1)\nu_2(x_1,t_2)} \label{ineq1c}
\end{align}
because by the Cauchy-Schwarz inequality
\[
\BE |X_1 \cap X_2|_d \leq \sqrt{\BE |X_1|_d} \sqrt{\BE |X_2|_d}
\]
for any random sets $X_1$ and $X_2$.
Combining \eqref{ineq1a}, \eqref{ineq1b} and \eqref{ineq1c}, we obtain
\begin{align}
&\left|I_1(\R^d,\R^d)-I_1(F_\o,\R^d)-I_1(\R^d,F_z)+I_1(F_\o,F_z)\right| \nonumber\\
&\leq \I\{y \in F_\o^c \cap F_z^c\} 
+ \I\{y \in F_\o \cap F_z^c\}\nu_1(x_1,t_1)  + \I\{y \in F_\o^c \cap F_z\}\nu_2(x_2,t_2)
\nonumber \\
&\quad + \I\{y \in F_\o \cap F_z\}\left[\nu_1(x_1,t_1)\nu_2(x_2,t_2) + 
\sqrt{\nu_1(x_1,t_1)\nu_2(x_2,t_2)}\right],
\label{I1four}
\end{align}
where $t_1 = d_B\bigl(x_1,B(y,r)\bigr)$ and $t_2 = d_B\bigl(x_2,B(y,r)\bigr)$.
If $x_1 \in E_\o$ and $x_2 \in E_z$, then $\nu_1(x_1,t_1)$ is bounded by
$\nu(t_1)$ and $\nu_2(x_2,t_2)$ is bounded by $\nu(t_2)$, where
\[
\nu(t) := \gamma \BE \bigl|\bigl(E_\o \oplus B_{t,R}\bigr) \cap F_\o^c\bigr|_d, \quad t \in \R^+.
\]
Let $c_B>0$ be such that $B\subset c_BB^d$. Then
\begin{equation}
\nu(t) \leq \BE\bigl|B_{t,R+\sqrt{d}}\bigr|_d \leq \kappa_d \BE (c_Bt+R+\sqrt{d})^d
\leq c_1(1+t^d),
\label{nubound}
\end{equation}
where $c_1$ is a finite constant that does not depend on $t$.
If $x_1 \in E_\o$ and $y \in F_\o^c$, then $\|x_1-y\| \geq m$ and
$d_B\bigl(x_1,B(y,r)\bigr) \geq (m-r)^+/c_B$.
Similarly, if $x_2 \in E_z$
and $y \in F_z^c$, then $d_B\bigl(x_2,B(y,r)\bigr) \geq (m-r)^+/c_B$.
Let 
\[
\psi(t,r) := \I\left\{t \ge \frac{(m-r)^+}{c_B}\right\}, \quad t,r \in \R^+.
\]
Then, by \eqref{I1four} and the substitutions $x_1-y \rightarrow x_1$ and
$x_2-y \rightarrow x_2$, we get
\begin{align*}
S_1 &\leq \sum_{z \in \Z^d} \int_{\R^d}\int_{\R^d} \int_0^\infty\int_{\R^d}
g\bigl(d_B(x_1,rB^d),r\bigr)g\bigl(d_B(x_2,rB^d),r\bigr) \I\{x_1+y \in E_\o,
x_2+y \in E_z\}  \\
&\qquad \times \Bigl[\psi(d_B(x_1,rB^d),r)\psi(d_B(x_2,rB^d),r)
+ \psi(d_B(x_2,rB^d),r)\nu\bigl(d_B(x_1,rB^d)\bigr) \\
&\qquad\quad  + \psi(d_B(x_1,rB^d),r)\nu\bigl(d_B(x_2,rB^d)\bigr) 
 + \nu\bigl(d_B(x_1,rB^d)\bigr)\nu\bigl(d_B(x_2,rB^d)\bigr) \\
&\qquad\quad + \sqrt{\nu\bigl(d_B(x_1,rB^d)\bigr)\nu\bigl(d_B(x_2,rB^d)\bigr)}\Bigr]
\,\d y\,\G(\d r)\,\d x_2\,\d x_1 ,
\end{align*}
and thus
\begin{align*}
S_1 &\leq \int_0^\infty \int_{\R^d}\int_{\R^d} g\bigl(d_B(x_1,rB^d),r\bigr)g\bigl(d_B(x_2,rB^d),r\bigr)\\
&\qquad\times
v\bigl(d_B(x_1,rB^d),d_B(x_2,rB^d),r\bigr)\,\d x_2\,\d x_1\,\G(\d r),
\end{align*}
where
\[
v(t_1,t_2,r) := \bigl(\psi(t_1,r)+\nu(t_1)\bigr)\bigl(\psi(t_2,r)+\nu(t_2)\bigr) + 
\sqrt{\nu(t_1)\nu(t_2)}, \quad t_1,t_2,r \in \R^+.
\]
Now Fubini's theorem, two applications of \eqref{refB} and definition \eqref{shortg} of $g$ yield
\[
S_1 \leq \int_C \int_0^\infty \int_0^\infty f(t_1)f(t_2)v(t_1,t_2,r)\,\d t_2\,\d t_1
\,\G(\d r).
\]
Our moment assumption \eqref{R2d} ensures that 
$\BE \bigl|E_\o \oplus B_{t,R}\bigr|_d < \infty$ and therefore $\nu(t)\to 0$ 
 as $m \to \infty$ by  Lebesgue's dominated convergence theorem.
By \eqref{nubound} and \eqref{assumptionf}, another application of Lebesgue's dominated convergence theorem
shows that $S_1\to 0$ as $m \to \infty$.

Next, we proceed with $S_2$. From \eqref{vor2.6} and the inequality 
$1-\e^{-a} \leq a$, for $a \geq 0$, we obtain 
for $x_1,x_2 \in \R^d$ and $t_1,t_2 \in \R^+$ that 
\begin{align}
&{\bar F}_B^{\R^d,\R^d}(x_1,x_2;t_1,t_2)-{\bar F}_B^{\R^d}(x_1;t_1){\bar F}_B^{\R^d}(x_2;t_2) \nonumber \\
&\qquad = {\bar F}_B^{\R^d,\R^d}(x_1,x_2;t_1,t_2) \Bigl(1-\exp\{-\gamma\BE
\kappa_B(x_2-x_1;t_1,t_2,R)\}\Bigr) \nonumber \\
&\qquad \leq \gamma \BE \kappa_B(x_2-x_1;t_1,t_2,R), \label{ineq2a}
\end{align}
where $\kappa_B$ is defined in \eqref{kappaB},
and
\begin{align*}
&\left({\bar F}_B^{\R^d,\R^d}-{\bar F}_B^{F_\o,\R^d}\right)(x_1,x_2;t_1,t_2)-
\left({\bar F}_B^{\R^d}-{\bar F}_B^{F_\o}\right)(x_1;t_1){\bar F}_B^{\R^d}(x_2;t_2) \\
&\quad = -{\bar F}_B^{F_\o,\R^d}(x_1,x_2;t_1,t_2)\Bigl(1-\exp\left\{-\gamma \BE
\bigl|(x_1+B_{t_1,R}) \cap (x_2+B_{t_2,R}) \cap F_\o\bigr|_d\right\}\Bigr) \\
&\quad\quad + {\bar F}_B^{\R^d,\R^d}(x_1,x_2;t_1,t_2)
\Bigl(1-\exp\left\{-\gamma \BE \bigl|(x_1+B_{t_1,R}) \cap (x_2+B_{t_2,R})\bigr|_d\right\}\Bigr).
\end{align*}
Therefore,
\begin{align}
&\left|\left({\bar F}_B^{\R^d,\R^d}-{\bar F}_B^{F_\o,\R^d}\right)(x_1,x_2;t_1,t_2)-
\left({\bar F}_B^{\R^d}-{\bar F}_B^{F_\o}\right)(x_1;t_1){\bar F}_B^{\R^d}(x_2;t_2) \right| \nonumber \\
&\qquad \leq \gamma \BE\kappa_B(x_2-x_1;t_1,t_2,R). \label{ineq2b}
\end{align}
Analogously,
\begin{align}
&\left|\left({\bar F}_B^{\R^d,\R^d}-{\bar F}_B^{\R^d,F_z}\right)(x_1,x_2;t_1,t_2)-
{\bar F}_B^{\R^d}(x_1;t_1)\left({\bar F}_B^{\R^d}-{\bar F}_B^{F_z}\right)(x_2;t_2) \right| \nonumber \\
&\qquad \leq \gamma \BE\kappa_B(x_2-x_1;t_1,t_2,R). \label{ineq2c}
\end{align}
Finally, using \eqref{F4} we get
\begin{align*}
&\left({\bar F}_B^{\R^d,\R^d}-{\bar F}_B^{F_\o,\R^d}-{\bar F}_B^{\R^d,F_z}
+{\bar F}_B^{F_\o,F_z}\right)(x_1,x_2;t_1,t_2)\\
&\qquad\qquad\qquad\qquad\qquad\qquad\qquad\qquad -\left({\bar F}_B^{\R^d}-{\bar F}_B^{F_\o}\right)(x_1;t_1)\left({\bar F}_B^{\R^d}-{\bar F}_B^{F_z}\right)(x_2;t_2) \\
&\quad =  {\bar F}_B^{F_\o,F_z}(x_1,x_2;t_1,t_2) \Bigl(1-\exp\left\{-\gamma \BE
\bigl|(x_1+B_{t_1,R}) \cap F_\o^c \cap \bigl((x_2+B_{t_2,R})^c 
\cup F_z^c\bigr)\bigr|_d\right\}\Bigr) \\
&\qquad\quad \times \Bigl(1-\exp\left\{-\gamma \BE \bigl|(x_2+B_{t_2,R}) \cap F_z^c \cap 
\bigl((x_1+B_{t_1,R})^c \cup F_\o^c\bigr)\bigr|_d\right\}\Bigr)  \\
&\qquad + {\bar F}_B^{\R^d,\R^d}(x_1,x_2;t_1,t_2)
\Bigl(1-\exp\left\{-\gamma \BE \bigl|(x_1+B_{t_1,R}) \cap (x_2+B_{t_2,R}) \cap F_\o^c \cap
F_z^c\bigr|_d\right\}\Bigr) \\
&\qquad - {\bar F}_B^{F_\o}(x_1;t_1){\bar F}_B^{F_z}(x_2;t_2)\bigl(1-\exp\{-\nu_1(x_1,t_1)\}\bigr)
\bigl(1-\exp\{-\nu_2(x_2,t_2)\}\bigr) \\
&\quad =  {\bar F}_B^{F_\o,F_z}(x_1,x_2;t_1,t_2) \Bigl(1-\exp\left\{-\gamma \BE
\bigl|(x_1+B_{t_1,R}) \cap F_\o^c \cap \bigl((x_2+B_{t_2,R})^c \cup F_z^c\bigr)\bigr|_d\right\}\Bigr) \\
&\qquad\quad \times \Bigl(1-\exp\left\{-\gamma \BE \bigl|(x_2+B_{t_2,R}) \cap F_z^c
\cap \bigl((x_1+B_{t_1,R})^c \cup F_\o^c\bigr)\bigr|_d\right\}\Bigr)  \\
&\qquad\quad \times \Bigl(1-\exp\left\{-\gamma \BE \bigl|(x_1+B_{t_1,R}) \cap (x_2+B_{t_2,R})
\cap F_\o \cap F_z\bigr|_d\right\}\Bigr) \\
&\qquad + {\bar F}_B^{\R^d,\R^d}(x_1,x_2;t_1,t_2)
\Bigl(1-\exp\{-\gamma \BE \bigl|(x_1+B_{t_1,R}) \cap (x_2+B_{t_2,R}) \cap F_\o^c \cap
F_z^c\bigr|_d\}\Bigr) \\
&\qquad - {\bar F}_B^{F_\o}(x_1;t_1){\bar F}_B^{F_z}(x_2;t_2)
\Bigl[\exp\left\{-\gamma \BE \bigl|(x_2+B_{t_2,R}) \cap F_z^c \cap 
\bigl((x_1+B_{t_1,R})^c \cup F_\o^c\bigr) \bigr|_d\right\} \\
&\qquad\quad \times \bigl(1-\exp\{-\gamma \BE \bigl|(x_1+B_{t_1,R}) \cap (x_2+B_{t_2,R}) 
\cap F_\o \cap F_z^c\bigr|_d\}\bigr)\bigl(1-\exp\{-\nu_1(x_1,t_1)\}\bigr) \\
&\qquad\quad + \exp\left\{-\gamma \BE
\bigl|(x_1+B_{t_1,R}) \cap F_\o^c \cap \bigl((x_2+B_{t_2,R})^c \cup F_z^c\bigr)\bigr|_d\right\} \\
&\qquad\quad \times \bigl(1-\exp\left\{-\gamma \BE \bigl|(x_1+B_{t_1,R}) \cap (x_2+B_{t_2,R}) 
\cap F_\o^c \cap F_z\bigr|_d\right\}\bigr) \\
&\qquad\quad \times \bigl(1-\exp\left\{-\gamma \BE \bigl|(x_2+B_{t_2,R}) \cap F_z^c \cap
\bigl((x_1+B_{t_1,R})^c \cup F_\o^c\bigr) \bigr|_d\right\}\bigr)\Bigr],
\end{align*}
which leads to
\begin{align}
&\left|\left({\bar F}_B^{\R^d,\R^d}-{\bar F}_B^{F_\o,\R^d} -{\bar F}_B^{\R^d,F_z}
+{\bar F}_B^{F_\o,F_z}\right)(x_1,x_2;t_1,t_2)\right.\nonumber\\
& \left.\qquad\qquad\qquad\qquad \qquad\qquad\qquad\qquad
-\left({\bar F}_B^{\R^d}-{\bar F}_B^{F_\o}\right)(x_1;t_1)\left({\bar F}_B^{\R^d}-{\bar F}_B^{F_z}\right)(x_2;t_2)\right|
\nonumber \\
&\quad \leq \nu_1(x_1,t_1)\nu_2(x_2,t_2)\gamma \BE\kappa_B(x_2-x_1;t_1,t_2,R)\nonumber\\
&\quad\quad + 
\gamma\BE\bigl|(x_1+B_{t_1,R}) \cap (x_2+B_{t_2,R})
\cap F_\o^c \cap F_z^c\bigr|_d \nonumber \\
&\quad\quad + \nu_1(x_1,t_1)\gamma \BE\kappa_B(x_2-x_1;t_1,t_2,R)
+ \nu_2(x_2,t_2)\gamma\BE\kappa_B(x_2-x_1;t_1,t_2,R). \label{ineq2d}
\end{align}
In the following, we use  \eqref{ineq2a} -- \eqref{ineq2d} with 
 $t_1 = d_B\bigl(x_1,B(y_1,r_1)\bigr)$,  
$t_2 = d_B\bigl(x_2,B(y_2,r_2)\bigr)$. Moreover, we define
\begin{align*}
\chi_1 &:= \I\left\{d_B\bigl(x_1,B(y_2,r_2)\bigr) \le t_1\right\} = \I\left\{x_1 \in y_2+B_{t_1,r_2}\right\}, \\
\chi_2 &:= \I\left\{d_B\bigl(x_2,B(y_1,r_1)\bigr) \le t_2\right\} = \I\left\{x_2 \in y_1+B_{t_2,r_1}\right\},
\end{align*}
and
\[
{\tilde\kappa}_B(x_1,x_2;t_1,t_2,R) := \bigl|(x_1+B_{t_1,R}) \cap (x_2+B_{t_2,R})
\cap F_\o^c\bigr|_d. 
\]
Next, we fix $x_1 \in E_\o$ and $x_2 \in E_z$, for the moment, and  distinguish several cases.
\begin{enumerate}
\item If $y_1 \in F_\o^c$ and $y_2 \in F_z^c$, then $t_1 \ge (m-r_1)^+/c_B$, $t_2 \ge (m-r_2)^+/c_B$
and using \eqref{ineq2a} we get 
\begin{align*}
&\left|I_2(\R^d,\R^d)-I_2(F_\o,\R^d)-I_2(\R^d,F_z)+I_2(F_\o,F_z)\right| =
\left|I_2(\R^d,\R^d)\right| \\
&\quad = \left|(1-\chi_1)(1-\chi_2){\bar F}_B^{\R^d,\R^d}(x_1,x_2;t_1,t_2)
-{\bar F}_B(t_1){\bar F}_B(t_2)\right| \\
&\quad \leq \gamma\BE\kappa_B(x_2-x_1;t_1,t_2,R) + \chi_1 + \chi_2.
\end{align*}
\item If $y_1 \in F_\o$ and $y_2 \in F_z^c \cap F_\o$, then $t_2 \ge (m-r_2)^+/c_B$ and
using \eqref{ineq2b} we get
\begin{align*}
&\left|I_2(\R^d,\R^d)-I_2(F_\o,\R^d)-I_2(\R^d,F_z)+I_2(F_\o,F_z)\right| =
\left|I_2(\R^d,\R^d)-I_2(F_\o,\R^d)\right| \\
&\quad = \left|(1-\chi_1)(1-\chi_2)\left({\bar F}_B^{\R^d,\R^d}-
{\bar F}_B^{F_\o,\R^d}\right)(x_1,x_2;t_1,t_2)-\left({\bar F}_B^{\R^d}-{\bar F}_B^{F_\o}\right)(x_1;t_1)
{\bar F}_B(t_2)\right| \\
&\quad \le \gamma\BE\kappa_B(x_2-x_1;s_1,s_2,R) + \chi_1 + \chi_2.
\end{align*}
\item If $y_1 \in F_\o$ and $y_2 \in F_z^c \cap F_\o^c$, then $t_2 \ge (m-r_2)^+/c_B$ and
using \eqref{ineq2b} we get
\begin{align*}
&\left|I_2(\R^d,\R^d)-I_2(F_\o,\R^d)-I_2(\R^d,F_z)+I_2(F_\o,F_z)\right| =
\left|I_2(\R^d,\R^d)-I_2(F_\o,\R^d)\right| \\
&\quad = \left|(1-\chi_2)\left((1-\chi_1){\bar F}_B^{\R^d,\R^d}-
{\bar F}_B^{F_\o,\R^d}\right)(x_1,x_2;t_1,t_2)-\left({\bar F}_B^{\R^d}-{\bar F}_B^{F_\o}\right)(x_1;t_1)
{\bar F}_B(t_2)\right| \\
&\quad \le \gamma\BE\kappa_B(x_2-x_1;s_1,s_2,R) + \chi_1 + \chi_2.
\end{align*}
\item If $y_1 \in F_o^c \cap F_z$ and $y_2 \in F_z$, then $t_1 \ge (m-r_1)^+/c_B$ and
using \eqref{ineq2c} we get
\begin{align*}
&\left|I_2(\R^d,\R^d)-I_2(F_\o,\R^d)-I_2(\R^d,F_z)+I_2(F_\o,F_z)\right| =
\left|I_2(\R^d,\R^d)-I_2(\R^d,F_z)\right| \\
&\quad = \left|(1-\chi_1)(1-\chi_2)\left({\bar F}_B^{\R^d,\R^d}-
{\bar F}_B^{\R^d,F_z}\right)(x_1,x_2;t_1,t_2)-{\bar F}_B(t_1)\left({\bar F}_B^{\R^d}-{\bar F}_B^{F_z}\right)(x_2;t_2)
\right| \\
&\quad \le \gamma\BE\kappa_B(x_2-x_1;t_1,t_2,R) + \chi_1 + \chi_2.
\end{align*}
\item If $y_1 \in F_\o^c \cap F_z^c$ and $y_2 \in F_z$, then $t_1 \ge (m-r_1)^+/c_B$ and
using \eqref{ineq2c} we get
\begin{align*}
&\left|I_2(\R^d,\R^d)-I_2(F_\o,\R^d)-I_2(\R^d,F_z)+I_2(F_\o,F_z)\right| =
\left|I_2(\R^d,\R^d)-I_2(\R^d,F_z)\right| \\
&\quad = \left|(1-\chi_1)\left((1-\chi_2){\bar F}_B^{\R^d,\R^d}-
{\bar F}_B^{\R^d,F_z}\right)(x_1,x_2;t_1,t_2)-{\bar F}_B(t_1)\left({\bar F}_B^{\R^d}-{\bar F}_B^{F_z}\right)(x_2;t_2)
\right| \\
&\quad \le \gamma\BE\kappa_B(x_2-x_1;t_1,t_2,R) + \chi_1 + \chi_2.
\end{align*}
\item If $y_1 \in F_\o \cap F_z$ and $y_2 \in F_z \cap F_\o$, then by \eqref{ineq1c} and \eqref{ineq2d},
\begin{align*}
&\left|I_2(\R^d,\R^d)-I_2(F_\o,\R^d)-I_2(\R^d,F_z)+I_2(F_\o,F_z)\right| \\
&\quad = \Bigl|(1-\chi_1)(1-\chi_2)\left({\bar F}_B^{\R^d,\R^d}-{\bar F}_B^{F_\o,\R^d}-{\bar F}_B^{\R^d,F_z}
+{\bar F}_B^{F_\o,F_z}\right)(x_1,x_2;t_1,t_2) \\
&\quad\quad - \left({\bar F}_B^{\R^d}-{\bar F}_B^{F_\o}\right)(x_1;t_1)\left({\bar F}_B^{\R^d}-{\bar F}_B^{F_z}\right)
(x_2;t_2)\Bigr| \\
&\quad \leq \bigl(\nu_1(x_1,t_1)\nu_2(x_2,t_2)+\nu_1(x_1,t_1)+\nu_2(x_2,t_2)\bigr)\gamma \BE\kappa_B(x_2-x_1;t_1,t_2,R) \\
&\quad\quad + \gamma\BE {\tilde\kappa}_B(x_1,x_2;t_1,t_2,R) 
+ (\chi_1 + \chi_2)\left(\nu_1(x_1,t_1)\nu_2(x_2,t_2) + \sqrt{\nu_1(x_1,t_1)\nu_2(x_2,t_2)}\right).
\end{align*}
\item If $y_1 \in F_\o \cap F_z^c$ and $y_2 \in F_z \cap F_\o$, then 
by \eqref{ineq1a}, \eqref{ineq1c} and \eqref{ineq2d},
\begin{align*}
&\left|I_2(\R^d,\R^d)-I_2(F_\o,\R^d)-I_2(\R^d,F_z)+I_2(F_\o,F_z)\right| = \Bigl|(1-\chi_1)(1-\chi_2) \\
&\qquad \times \left({\bar F}_B^{\R^d,\R^d}-{\bar F}_B^{F_\o,\R^d}\right)(x_1,x_2;t_1,t_2) 
-(1-\chi_1)\left({\bar F}_B^{\R^d,F_z}-{\bar F}_B^{F_\o,F_z}\right)(x_1,x_2;t_1,t_2) \\
&\quad\quad - \left({\bar F}_B^{\R^d}-{\bar F}_B^{F_\o}\right)(x_1;t_1)\left({\bar F}_B^{\R^d}-{\bar F}_B^{F_z}\right)
(x_2;t_2)\Bigr| \\
&\quad \leq \bigl(\nu_1(x_1,t_1)\nu_2(x_2,t_2)+\nu_1(x_1,t_1)+\nu_2(x_2,t_2)\bigr)\gamma \BE\kappa_B(x_2-x_1;t_1,t_2,R) \\
&\quad\quad + \gamma\BE {\tilde\kappa}_B(x_1,x_2;t_1,t_2,R) 
 + \chi_1\left(\nu_1(x_1,t_1)\nu_2(x_2,t_2)+\sqrt{\nu_1(x_1,t_1)\nu_2(x_2,t_2)}\right)\\
&\quad\quad +\chi_2\nu_1(x_1,t_1).
\end{align*}
\item If $y_1 \in F_\o \cap F_z$ and $y_2 \in F_z \cap F_\o^c$, then  
by \eqref{ineq1b}, \eqref{ineq1c} and \eqref{ineq2d},
\begin{align*}
&\left|I_2(\R^d,\R^d)-I_2(F_\o,\R^d)-I_2(\R^d,F_z)+I_2(F_\o,F_z)\right| = \Bigl|(1-\chi_1)(1-\chi_2) \\
&\quad\quad \times \left({\bar F}_B^{\R^d,\R^d}-{\bar F}_B^{\R^d,F_z}\right)(x_1,x_2;t_1,t_2)-
(1-\chi_2)\left({\bar F}_B^{F_\o,\R^d}-{\bar F}_B^{F_\o,F_z}\right)(x_1,x_2;t_1,t_2) \\
&\quad\quad - \left({\bar F}_B^{\R^d}-{\bar F}_B^{F_\o}\right)(x_1;t_1)\left({\bar F}_B^{\R^d}-{\bar F}_B^{F_z}\right)
(x_2;t_2)\Bigr| \\
&\quad \leq \bigl(\nu_1(x_1,t_1)\nu_2(x_2,t_2)+\nu_1(x_1,t_1)+\nu_2(x_2,t_2)\bigr)\gamma \BE\kappa_B(x_2-x_1;t_1,t_2,R) \\
&\quad\quad + \gamma\BE {\tilde\kappa}_B(x_1,x_2;t_1,t_2,R) 
+ \chi_1\nu_2(x_2,t_2)\\
&\quad\quad + \chi_2\left(\nu_1(x_1,t_1)\nu_2(x_2,t_2)+\sqrt{\nu_1(x_1,t_1)\nu_2(x_2,t_2)}\right).
\end{align*}
\item If $y_1 \in F_\o \cap F_z^c$ and $y_2 \in F_z \cap F_\o^c$, then $\chi_1 = 1$ implies $t_1 \ge (m-r_2)^+/c_B$
and $\chi_2=1$ implies $t_2 \ge (m-r_1)^+/c_B$, and by \eqref{ineq2d} we have
\begin{align*}
&|I_2(\R^d,\R^d)-I_2(F_\o,\R^d)-I_2(\R^d,F_z)+I_2(F_\o,F_z)| = \Bigl|\Bigl((1-\chi_1)(1-\chi_2){\bar F}_B^{\R^d,\R^d} \\
&\quad\quad -(1-\chi_2){\bar F}_B^{F_\o,\R^d}
-(1-\chi_1){\bar F}_B^{\R^d,F_z}+{\bar F}_B^{F_\o,F_z}\Bigr)(x_1,x_2;t_1,t_2) \\
&\quad\quad - \left({\bar F}_B^{\R^d}-{\bar F}_B^{F_\o}\right)(x_1;t_1)\left({\bar F}_B^{\R^d}-{\bar F}_B^{F_z}\right)
(x_2;t_2)\Bigr| \\
&\quad \leq \bigl(\nu_1(x_1,t_1)\nu_2(x_2,t_2)+\nu_1(x_1,t_1)+\nu_2(x_2,t_2)\bigr)\gamma \BE\kappa_B(x_2-x_1;t_1,t_2,R) \\
&\quad\quad + \gamma\BE {\tilde\kappa}_B(x_1,x_2;t_1,t_2,R) 
+ \chi_1 + \chi_2.
\end{align*}
\end{enumerate}
Altogether this gives
\[
\left|I_2(\R^d,\R^d)-I_2(F_\o,\R^d)-I_2(\R^d,F_z)+I_2(F_\o,F_z)\right| \leq I_2^*(x_1,x_2,y_1,y_2,r_1,r_2),
\]
where
\begin{align*}
&I_2^*(x_1,x_2,y_1,y_2,r_1,r_2) := \bigl(\psi(t_1,r_1)\psi(t_2,r_2)+2\psi(t_2,r_2)+2\psi(t_1,r_1)\bigr) \\
&\quad\quad \times (\gamma\BE\kappa_B(x_2-x_1;t_1,t_2,R) + \chi_1 + \chi_2) \\
&\quad + 4\bigl(\nu(t_1)\nu(t_2)+\nu(t_1)+\nu(t_2)\bigr)
\gamma \BE\kappa_B(x_2-x_1;t_1,t_2,R) + 4\gamma\BE {\tilde\kappa}_B(x_1,x_2;t_1,t_2,R) \\
&\quad + 2(\chi_1 + \chi_2)\left(\nu(t_1)\nu(t_2) + \sqrt{\nu(t_1)\nu(t_2)}\right)\\
&\quad + \chi_2\nu(t_1) + \chi_1\nu(t_2) + \chi_1\psi(t_1,r_2) + \chi_2\psi(t_2,r_1)
\end{align*}
does not depend on $z$.
We recall the dependence 
of $\chi_1,\chi_2$ on $x_1,x_2,y_1,y_2$ and then carry out the substitutions 
$x_1-y_1 \rightarrow x_1$ and $x_2-y_2 \rightarrow x_2$ to get
\begin{align*}
S_2 &\leq \sum_{z \in \Z^d} \int_{\R^d} \int_{\R^d}
\int_0^\infty\int_0^\infty\int_{\R^d}\int_{\R^d} 
g\bigl(d_B(x_1,r_1B^d),r_1\bigr)g\bigl(d_B(x_2,r_2B^d),r_2\bigr)\I\{x_1+y_1 \in E_\o\} \\
&\quad\times \I\{x_2+y_2 \in E_z\}
I_2^*(x_1+y_1,x_2+y_2,y_1,y_2,r_1,r_2)\,\d y_1\,\d y_2\,\G(\d r_1)\,\G(\d r_2)\,\d x_2\,\d x_1 \\
&= \int_{\R^d}\int_{\R^d}\int_0^\infty\int_0^\infty\int_{\R^d}\int_{\R^d} 
g\bigl(d_B(x_1,r_1B^d),r_1\bigr)g\bigl(d_B(x_2,r_2B^d),r_2\bigr)\I\{x_1+y_1 \in E_\o\} \\
&\quad\times I_2^*(x_1+y_1,x_2+y_2,y_1,y_2,r_1,r_2)
\,\d y_1\,\d y_2\,\G(\d r_1)\,\G(\d r_2)\,\d x_2\,\d x_1.
\end{align*}
To the inner integrals we apply the relations
\begin{align*}
\int_{\R^d} \int_{\R^d} \I\{x_1+y_1 \in E_\o\}\chi_1\,\d y_2\,\d y_1
&= \int_{E_\o-x_1}\int_{\R^d} \I\{x_1+y_1-y_2 \in B_{t_1,r_2}\}\,\d y_2\,\d y_1 = |B_{t_1,r_2}|_d, \\
\int_{\R^d} \int_{\R^d} \I\{x_1+y_1 \in E_\o\}\chi_2\,\d y_2\,\d y_1
&= \int_{E_\o-x_1}\int_{\R^d} \I\{x_2+y_2-y_1 \in B_{t_2,r_1}\}\,\d y_2\,\d y_1 = |B_{t_2,r_1}|_d, 
\end{align*}
\begin{align*}
&\int_{\R^d} \int_{\R^d} \I\{x_1+y_1 \in E_\o\}\BE\kappa_B(x_2+y_2-x_1-y_1;t_1,t_2,R)\,\d y_2\,\d y_1
= \BE |B_{t_1,R}|_d |B_{t_2,R}|_d, \\
&\int_{\R^d} \int_{\R^d} \I\{x_1+y_1 \in E_\o\}\BE{\tilde\kappa}_B(x_1+y_1,x_2+y_2;t_1,t_2,R)\,\d y_2\,\d y_1 \\
&\quad = \int_{\R^d} \I\{x_1+y_1 \in E_\o\} \BE |(x_1+y_1+B_{t_1,R}) \cap F_\o^c|_d |B_{t_2,R}|_d
\leq \BE |(E_\o \oplus B_{t_1,R}) \cap F_\o^c|_d |B_{t_2,R}|_d.
\end{align*}
In the last two equations we used \cite[Theorem 5.2.1]{SW08}.
Consequently, \eqref{refB} and \eqref{shortg} yield
\begin{align*}
S_2 &\leq \int_C\int_C\int_0^\infty\int_0^\infty f(t_1)f(t_2)
\Bigl[\bigl(\psi(t_1,r_1)\psi(t_2,r_2)+2\psi(t_2,r_2)+2\psi(t_1,r_1)\bigr) \\
&\quad \times \bigl(\gamma\BE |B_{t_1,R}|_d |B_{t_2,R}|_d
+|B_{t_1,r_2}|_d+|B_{t_2,r_1}|_d\bigr) \\
&\quad + 4\bigl(\nu(t_1)\nu(t_2)+\nu(t_1)+\nu(t_2)\bigr)\gamma\BE |B_{t_1,R}|_d |B_{t_2,R}|_d
+ 4\gamma\BE |(E_\o \oplus B_{t_1,R})\cap F_{\o}^c|_d   |B_{t_2,R}|_d \\
&\quad + 2(|B_{t_1,r_2}|_d+|B_{t_2,r_1}|_d)\left(\nu(t_1)\nu(t_2) + \sqrt{\nu(t_1)\nu(t_2)}\right)
+ |B_{t_2,r_1}|_d\nu(t_1) + |B_{t_1,r_2}|_d\nu(t_2) \\
&\quad + |B_{t_1,r_2}|_d\psi(t_1,r_2) + |B_{t_2,r_1}|_d\psi(t_2,r_1)
\Bigr]\,\d t_2\,\d t_1\,\G(\d r_2)\,\G(\d r_1).
\end{align*}
Using $\BE|B_{t,R}|_d \leq \BE|(E_\o \oplus B_{t,R})|_d \leq c_1(1+t^d)$, cf.~\eqref{nubound},
the Cauchy-Schwarz inequality and assumption \eqref{assumptionf},
it follows from the Lebesgue dominated convergence theorem that $S_2 \rightarrow 0$ as $m \rightarrow \infty$.

\end{proof}

Now we are dealing with the asymptotic normality of $\widehat{\G}_n(C)$.

\begin{theorem} \label{clt2}
Assume that \eqref{assumptionf} is satisfied.
Let $W_n = [-n,n)^d$.
If $C\subset \R^+$ is a Borel set, then
\[
\sqrt{|W_n|_d}\left(\widehat{\G}_n(C) - \G(C)\right) \longnd N\bigl(0,\sigma_\G^2(C)\bigr),
\]
where
\begin{align}\label{9054}
\sigma_\G^2(C) := \frac{1}{\gamma^2\beta^2}
\left[(1-\G(C))\sigma^2(C)+\G(C)\sigma^2(\R^+\setminus C)-\G(C)(1-\G(C))\sigma^2(\R^+)\right]
\end{align}
and $\sigma^2(\cdot)$ is given by \eqref{asvarC}. If $0<\G(C)<1$, then
$\sigma_\G^2(C)>0$.
\end{theorem}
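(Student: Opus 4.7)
The plan is to combine Theorem \ref{clt1} with Slutsky's lemma via a linearization of the ratio $\widehat{\G}_n(C)=\eta_{W_n}(C)/\eta_{W_n}(\R^+)$. Observe that
\[
\sqrt{|W_n|_d}\,\bigl(\widehat{\G}_n(C)-\G(C)\bigr) = \frac{|W_n|_d}{\eta_{W_n}(\R^+)}\cdot T_n,
\]
where $T_n:=[(1-\G(C))\eta_{W_n}(C)-\G(C)\eta_{W_n}(\R^+\setminus C)]/\sqrt{|W_n|_d}$ is centered by \eqref{Eeta}. Since \eqref{etaW2} gives $|W_n|_d/\eta_{W_n}(\R^+)\to 1/(\gamma\beta)$ in probability, Slutsky's lemma reduces the proof to the CLT $T_n\longnd N(0,\gamma^2\beta^2\sigma_\G^2(C))$.

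For this, let $X_n$ and $Y_n$ denote the centered and $\sqrt{|W_n|_d}$-scaled versions of $\eta_{W_n}(C)/|W_n|_d$ and $\eta_{W_n}(\R^+\setminus C)/|W_n|_d$, so that $T_n=(1-\G(C))X_n-\G(C)Y_n$. Theorem \ref{clt1} yields the marginal limits $X_n\longnd N(0,\sigma^2(C))$, $Y_n\longnd N(0,\sigma^2(\R^+\setminus C))$, and $X_n+Y_n\longnd N(0,\sigma^2(\R^+))$. I would obtain joint convergence of $(X_n,Y_n)$ by the Cram\'er-Wold device: the proof of Theorem \ref{clt1} goes through verbatim with the indicator $\I\{r\in C\}$ replaced by any bounded measurable $\phi:\R^+\to\R$, because the $m$-dependent approximation, the moment bound on $\BE(\eta_\o^{(m)})^2$ and the covariance estimates $S_1,S_2$ all depend on the $r$-factor of the integrand only through its $L^\infty$-norm. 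Applied to $\phi(r)=a\I\{r\in C\}+b\I\{r\in\R^+\setminus C\}$ for arbitrary $a,b\in\R$, this yields joint convergence of $(X_n,Y_n)$ to a centered Gaussian vector $(X,Y)$, whose covariance is pinned down by polarization as $\cov(X,Y)=\tfrac12[\sigma^2(\R^+)-\sigma^2(C)-\sigma^2(\R^+\setminus C)]$. A short algebraic computation of $\var((1-\G(C))X-\G(C)Y)$ then gives exactly $\gamma^2\beta^2\sigma_\G^2(C)$ with $\sigma_\G^2(C)$ as in \eqref{9054}.

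For the positivity claim I would reuse the Poincar\'e-type lower bound employed at the end of the proof of Theorem \ref{asvar}, now applied to $H(\Psi):=\sqrt{|W_n|_d}\,T_n$, which can be written as $\int_{W_n}[\I\{r_B(x,Z)\in C\}-\G(C)]f(d_B(x,Z))h_B(d_B(x,Z),r_B(x,Z))^{-1}\,\d x$. A computation modelled on the one in Theorem \ref{asvar}, using that $\int[\I\{r\in C\}-\G(C)]\,\G(\d r)=0$ cancels the $g(t,s)$-contribution to the increment, shows that adding a germ $(y,r)$ produces a mean change of the form $\gamma[\I\{r\in C\}-\G(C)]\Phi(y,r)$, with $\Phi(y,r)>0$ for $\G$-a.e.\ $r$ and whose $L^2(\d y)$-integral over $W_n$ grows linearly in $|W_n|_d$. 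Since $\int_{\R^+}[\I\{r\in C\}-\G(C)]^2\,\G(\d r)=\G(C)(1-\G(C))>0$ when $0<\G(C)<1$, dividing by $|W_n|_d$ and letting $n\to\infty$ yields $\sigma_\G^2(C)>0$.

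The principal technical obstacle is the extension of Theorem \ref{clt1} to bounded signed integrands required for the joint convergence of $(X_n,Y_n)$; this amounts to verifying that no step in the bounds on $S_1,S_2$ exploits the non-negativity of the weight in an essential way, which can be confirmed by splitting $\phi$ into its positive and negative parts and applying the previous estimates to each piece separately.
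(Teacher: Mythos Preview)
Your proposal is correct and follows essentially the same route as the paper. Both proofs linearize the ratio to the centered quantity $T_n=(1-\G(C))\eta_{W_n}(C)-\G(C)\eta_{W_n}(\R^+\setminus C)$ (which equals the paper's $\gamma\beta Y_n$), invoke Slutsky via \eqref{etaW2}, and then rerun the $m$-dependent approximation from Theorem~\ref{clt1} with the signed weight $\I\{r\in C\}-\G(C)$ in place of $\I\{r\in C\}$; the positivity argument via the Poincar\'e lower bound and the cancellation of the $\tilde g(d_B(\o,Z),r_B(\o,Z))$-term are exactly the paper's steps as well.

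The only cosmetic difference is that you pass through joint convergence of $(X_n,Y_n)$ via Cram\'er--Wold before taking the specific linear combination, whereas the paper applies the modified Theorem~\ref{clt1} directly to the single integrand $\phi(r)=\I\{r\in C\}-\G(C)$; similarly, the paper obtains the variance formula \eqref{9054} by expanding $\cov(\eta_{W_n}(C),\eta_{W_n}(\R^+))$ using additivity of $\eta_{W_n}(\cdot)$ and Theorem~\ref{asvar}, which is algebraically equivalent to your polarization step.
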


\begin{proof}
Using \eqref{etaW2} and Slutsky's theorem, the weak limit of 
$\sqrt{|W_n|_d}\left(\widehat{\G}_n(C) - \G(C)\right)$
coincides with the weak limit of
\[
Y_n:=\frac{1}{\gamma\,\beta\,\sqrt{|W_n|_d}}
\left(\eta_{W_n}(C) - \eta_{W_n}(\R^+)\G(C)\right).
\]
Observing that
\begin{align*}
\gamma\,\beta\,Y_n 
=\frac{1}{\sqrt{|W_n|_d}}
\int_{W_n} \bigl(\I\{r_B(x,Z)\in C\}-\G(C)\bigr) f\bigl(d_B(x,Z)\bigr)h_B\bigl(d_B(x,Z),r_B(x,Z)\bigr)^{-1}\,\d x,
\end{align*}
we can proceed along the same lines as in the proof of Theorem \ref{clt1}
and obtain
\[
\gamma\,\beta\,Y_n \longnd N\left(0,\gamma^2\beta^2\sigma_\G^2(C)\right),
\]
provided we can identify the asymptotic variance $\sigma_\G^2(C)$ of $Y_n$.
Theorem \ref{asvar} implies that
\begin{align}\label{986}
&\gamma^2\beta^2\lim_{n\to\infty}\var Y_n\nonumber\\
&\qquad =\sigma^2(C)+\G(C)^2\sigma^2(\R^+)
-2\G(C) \lim_{n\to\infty} \frac{1}{|W_n|_d}\cov\bigl(\eta_{W_n}(C),\eta_{W_n}(\R^+)\bigr).
\end{align}
Since $\eta_{W_n}(\cdot)$ is additive, we obtain from Theorem \ref{asvar} that
\begin{align*}
2\lim_{n\to\infty} &\frac{1}{|W_n|_d}\cov\bigl(\eta_{W_n}(C),\eta_{W_n}(\R^+)\bigr)
=2\sigma^2(C)+2\lim_{n\to\infty} \frac{1}{|W_n|_d}
\cov\bigl(\eta_{W_n}(C),\eta_{W_n}(\R^+\setminus C)\bigr)\\
&=2\sigma^2(C)+\lim_{n\to\infty} \frac{1}{|W_n|_d}
\bigl(\var \eta_{W_n}(\R^+)-\var \eta_{W_n}(C)-\var \eta_{W_n}(\R^+\setminus C)\bigr)\\
&=2\sigma^2(C)+\sigma^2(\R^+)-\sigma^2(C)-\sigma^2(\R^+\setminus C)
=\sigma^2(C)+\sigma^2(\R^+)-\sigma^2(\R^+\setminus C).
\end{align*}
Inserting this result into \eqref{986} we obtain \eqref{9054}
upon some simplification.

To prove the last assertion, we define 
$\tilde g(t,s):=(\I\{s\in C\}-\G(C))f(t)h_B(t,s)^{-1}$
and assume that $0<\G(C)<1$. For a convex body $W\subset\R^d$
we need to consider the variance of
$$
H_W:=\int_W \tilde{g}\bigl(d_B(x,Z),r_B(x,Z)\bigr)\,\d x.
$$
As in the proof of the positivity assertion in Theorem \ref{asvar} 
we obtain that
\begin{align}\label{vun22}
\var H_W\ge \gamma \int^\infty_0\int_{\R^d}\tilde h(y,r)^2 \,\d y\,\G(\d r),
\end{align}
where
\begin{align*}
\tilde h(y,r):=
\,&\BE\int_W \I\bigl\{d_B(\o,B(y-x,r))<d_B(\o,Z)\bigr\}
\tilde{g}\bigl(d_B(\o,B(y-x,r)),r\bigr)\,\d x\\
&-\BE\int_W \I\bigl\{d_B(\o,B(y-x,r))<d_B(\o,Z)\bigr\}
\tilde{g}\bigl(d_B(\o,Z),r_B(\o,Z)\bigr)\,\d x.
\end{align*}
By \eqref{g} and the definition of $\tilde{g}$ the second expectation on the
above right-hand side vanishes for all $y\in W$ and $r\ge 0$.
Therefore
\begin{align*}
\tilde h(y,r)=
\int_W \bar{F}_B\bigl(d_B(\o,B(y-x,r))\bigr)
\tilde{g}\bigl(d_B(\o,B(y-x,r)),r\bigr)\,\d x.
\end{align*}
Again as in the proof of Theorem \ref{asvar} we let
$C':=\R^+\setminus C$ and obtain from Jensen's inequality 
and \eqref{vun22} that
\begin{align*}
\frac{\sqrt{\var H_W}}{\sqrt{|W|_d}}&\ge
\frac{c}{|W|_d}\int_{C'} \int_W\int_W
\bar{F}_B\bigl(d_B(\o,B(y-x,r))\bigr)
f\bigl(d_B(\o,B(y-x,r))\bigr)\\
&\quad \times\bigl(h_B\bigl(d_B(\o,B(y-x,r)),r\bigr)\bigr)^{-1}\,\d x \,\d y\,\G(\d r)\\
&=
\frac{c}{|W|_d}\int_{C'} \int_{\R^d}|W\cap (W-y)|_d
\bar{F}_B\bigl(d_B(\o,B(y,r))\bigr)f\bigl(d_B(\o,B(y,r))\bigr)\\
&\quad\times\bigl(h_B\bigl(d_B(\o,B(y,r)),r\bigr)\bigr)^{-1} \,\d y\,\G(\d r),
\end{align*}
where $c>0$ is a constant not depending on $W$.
Hence it is sufficient to show that
\begin{align*}
\int_{C'} \int_{\R^d}
\bar{F}_B\bigl(d_B(\o,B(y,r))\bigr)f\bigl(d_B(\o,B(y,r))\bigr)
\bigl(h_B\bigl(d_B(\o,B(y,r)),r\bigr)\bigr)^{-1} \,\d y\,\G(\d r)>0.
\end{align*}
By \eqref{refB} the above integral equals
$\G(C')\int_{\R^d} \bar{F}_B(t)f(t)\,  \d t$,
which is positive by \eqref{beta}.
\end{proof}

\begin{remark} \label{sigmaG}\rm
After some manipulation we get
\[
\gamma^2\beta^2\sigma_\G^2(C) = \gamma \int_{\R^d} {\tilde\tau}_1(C,u)\,\d u + 
\gamma^2 \int_{\R^d} {\tilde\tau}_2(C,u)\,\d u,
\]
where
\begin{align*}
{\tilde\tau}_1(C,u) &:= \int_0^\infty \int_{\R^d} \frac{f\bigl(d_B(\o,B(x,r))\bigr)}{h_B\bigl(d_B(\o,B(x,r)),r\bigr)}
\frac{f\bigl(d_B(u,B(x,r))\bigr)}{h_B\bigl(d_B(u,B(x,r)),r\bigr)} \\
&\qquad\qquad \times {\bar F}_B^{(2)}\bigl(u;d_B(\o,B(x,r)),d_B(u,B(x,r))\bigr)\bigl(\I\{r \in C\}-\G(C)\bigr)^2
\,\d x\,\G(\d r)
\end{align*}
and
\begin{align*}
&{\tilde\tau}_2(C,u) \\
&:= \int_0^\infty\int_0^\infty \int_{\R^d}\int_{\R^d}
\frac{f\bigl(d_B(x_1,B(\o,r_1))\bigr)}{h_B\bigl(d_B(x_2,B(\o,r_1)),r_1\bigr)}
\frac{f\bigl(d_B(x_2,B(\o,r_2))\bigr)}{h_B\bigl(d_B(x_2,B(\o,r_2)),r_2\bigr)}  \\
&\qquad\times 
\I\left\{d_B\bigl(x_2,B(u,r_2)\bigr)\leq d_B\bigl(x_1,B(\o,r_1)\bigr)\right\}
\I\left\{d_B\bigl(x_1,B(-u,r_1)\bigr) \leq d_B\bigl(x_2,B(\o,r_2)\bigr)\right\} \\
&\qquad\times {\bar F}_B^{(2)}\bigl(u;d_B(x_1,B(\o,r_1)),d_B(x_2,B(\o,r_2))\bigr)
\bigl(\I\{r_1 \in C\}-\G(C)\bigr)\bigl(\I\{r_2 \in C\}-\G(C)\bigr) \\
&\qquad\times \,\d x_1\,\d x_2\,\G(\d r_1)\,\G(\d r_2).
\end{align*}
This relation can be also obtained directly by an analogue
of the proof of Theorem \ref{asvar}.
\end{remark}

\section{The planar case} \label{sec:planar}

We mentioned at the beginning that the estimator $\widehat{\G}$ which we discussed so far
is based on the data $\left\{\bigl(d_B(x,Z),r_B(x,Z)\bigr):x\in W \setminus Z\right\}$ and therefore may require
information from outside the window $W$. To overcome this problem, a common procedure in spatial 
statistics is the so-called {\em minus sampling}, which can be used, e.g., if the radius 
distribution $\G$ is concentrated on an interval $[0,r_0]$, $0<r_0<\infty$. We can avoid 
such a condition by assuming that the function $f$ is concentrated on an interval $[0,\varepsilon]$ with  
$\varepsilon > 0$. If we then assume that $Z$ is observable in a window $W^{(\varepsilon)}$ which contains  
$W \oplus \varepsilon B$, then, for each $x\in W$, we have either $f\bigl(d_B(x,Z)\bigr)=0$ or 
$d_B(x,Z)\le\varepsilon$, in which case the (almost surely unique) contact point $\bigl(x+d_B(x,Z) B\bigr)\cap Z$ 
lies in $W^{(\varepsilon)}$. 

For practical applications, the planar case $d=2$ is particularly important. Also, then, the spherical case $B=B^2$ and the linear
case $B=[0,u]$ (with a given direction $u$) play a major role. For simplicity, in the following considerations we concentrate on the
window $W=[0,1]^2$ and we assume, as explained above, that $f$ is concentrated on $[0,\varepsilon ], \varepsilon > 0$, and that $Z$ is observed in $W^{(\varepsilon )}$. Let $\tilde C_1,\ldots,\tilde C_k$ be the (connected and relatively open) visible arcs in  
$\partial Z\cap W^{(\varepsilon)}$. We need not know whether some of these arcs belong to the same particle. By 
$C_1\subset \tilde C_1,\ldots,C_k\subset \tilde C_k$ 
we denote the corresponding ``effective'' arcs; these consist of the points
$\bigl(x+d_B(x,Z) B\bigr)\cap Z\in \tilde C_i$, $x\in W\setminus Z$, for which $d_B(x,Z)\le\varepsilon$. Let $r_i$ be the radius and 
$l_i$ the length of $C_i$, and let $A_i$ be the set of points $x\in W\setminus Z$ with $d_B(x,Z)\le\varepsilon$ and which project 
onto $C_i$, $i=1,\ldots,k$, in the sense that $(x+d_B(x,Z) B)\cap Z$ consists of a unique point and this point lies in $C_i$, 
for $x\in A_i$. Then our estimator $\widehat{\G}$ is of the form
\[
\widehat{\G} = \frac{1}{\sum_{i=1}^k w_i} \sum_{i=1}^k w_i\delta_{r_i},
\]
where the weight $w_i$ is given by
\[
w_i = \int_{A_i} f\bigl(d_B(x,Z)\bigr)h_B\bigl(d_B(x,Z),r_B(x,Z)\bigr)^{-1}\,\d x.
\]

For $B=B^2$ we have $h_{B^2}(t,r) = 2\pi (t+r)$ (see Remark \ref{Bball}), hence if 
$f(t) = {\varepsilon}^{-1}\I\{t\le\varepsilon\}$ then
\[
w_i = \frac{1}{2\pi\varepsilon}\int_{A_i\cap(C_i+\varepsilon B^2)} \frac{1}{d_{B^2}(x,C_i)+r_i}\ \d x.
\]
If we let $\varepsilon\to 0$, the weights converge to $w_i =  l_i/(2\pi r_i)$ if $r_i>0$ and to $w_i=1$ if $r_i=l_i=0$. Then  the estimator becomes 
\[
\widehat{\G}_o := \left({\sum_{i=1}^k \frac{l_i}{r_i}}\right)^{-1} \sum_{i=1}^k \frac{l_i}{r_i}\delta_{r_i}
\]
with $l_i/r_i$ interpreted as $2\pi$ if $r_i=l_i=0$. 
Notice also that the outer sampling window $W^{(\varepsilon)}$ then shrinks to $W$, so that in the limit only information in $W$ 
is needed. The estimator $\widehat{\G}_o$ was discussed by Hall \cite[Chapter 5.6]{Hall} (more generally, he considered  estimators of $\BE A(R)$, for a given function $A$; $\widehat{\G}_o$ corresponds to the case $A=\I_C$).

For $B=[0,u]$ (with $u\in\{\pm e_1,\pm e_2\}$), assuming (in the linear case) that $\mathbb{G}(\{0\})=0$ and hence $r_i>0$,  and again choosing $f(t) =  {\varepsilon}^{-1}\I\{t\le\varepsilon\}$, 
we get $h_B(t,r) = 2r$ and
\[
w_i = \frac{1}{2\varepsilon r_i}\int_{A_i\cap(C_i+\varepsilon [0,-u])}\,\d x .
\]
This  yields an estimator $\widehat{\G}_{l,u}$ in the limit $\varepsilon\to 0$, which is given by
\[
\widehat{\G}_{l,u} := \left({\sum_{i=1}^k \frac{l_i(u)}{r_i}}\right)^{-1} \sum_{i=1}^k \frac{l_i(u)}{r_i}\delta_{r_i}.
\]
Here, $l_i(u)$ is the length of the projection of the visible part of $C_i$ in direction $u$ (projected onto the line orthogonal 
to $u$). 
The estimator can be improved by combining $u=e_1,-e_1,e_2,-e_2$, 
\[
\widehat{\G}_l := \frac{1}{4}\left(\widehat{\G}_{l,e_1}+\widehat{\G}_{l,-e_1}
+\widehat{\G}_{l,e_2}+\widehat{\G}_{l,-e_2}\right).
\]
For applications, it would be natural to choose $\varepsilon=1$ which yields weights 
\[
w_i =\frac{|A_i|_2}{2r_i}, \quad i=1,\ldots,k,
\]
and gives the estimator
\[
\widehat{\G} = \left(\sum_{i=1}^k\frac{|A_i|_2}{r_i}\right)^{-1} \sum_{i=1}^k \frac{|A_i|_2}{r_i}\delta_{r_i}.
\]
Hence, in this case and with $u=e_1$, information in $[0,2]\times [0,1]$ would be required and the estimation is based on the areas 
of the regions $A_i\subset [0,1]^2$. Of course, the estimation can be again improved by combining the estimators for 
$u=e_1,-e_1,e_2,-e_2$ which are available if $Z$ is observed in $[-1,2]^2$.

If we do not have information from outside $W$, then we may use a minus sampling
approach and replace $W$ by the eroded window $W_{\ominus \varepsilon} := \{x \in W: 
x+\varepsilon B \subset W\}$, i.e.~we consider the following estimator
\[
\widehat{\G}_{\ominus \varepsilon}(C) := \frac{\eta_{W_{\ominus\varepsilon}}(C)}{\eta_{W_{\ominus\varepsilon}}(\R^+)}.
\]
Another possibility would be to use the naive approach which ignores edge effects.  
Then we have the {\em uncorrected estimator}
\[
\widehat{\G}_{{\rm u}}(C) := \frac{\eta_{W,{\rm u}}(C)}{\eta_{W,{\rm u}}(\R^+)},
\]
where
\[
\eta_{W,{\rm u}}(C) := \int_W \I\{r_B(x,Z \cap W)\in C\}\frac{f\bigl(d_B(x,Z \cap W)\bigr)}{h_B\bigl(d_B(x,Z \cap W),r_B(x,Z \cap W)\bigr)}\,\d x.
\]
If $B=[0,u]$, then it can happen that $d_B(x,Z \cap W)=\infty$. In that case we use our convention concerning $f \cdot h_B^{-1}$,
i.e.~the points $x$ satisfying $d_B(x,Z \cap W)=\infty$ do not contribute to $\eta_{W,{\rm u}}(C)$.
Besides minus sampling there exist more sophisticated methods of edge correction in the statistics of spatial point processes.
We adopt the idea of local minus sampling that was originally applied in \cite{Hanisch} to the estimation 
of the nearest neighbour distance distribution function for stationary point processes (see also \cite{HBG1999}).
We use only points that are closer to $Z$ than to the boundary of the window $W$.
This gives the {\em Hanisch type estimator}
\[
\widehat{\G}_{{\rm H}}(C) := \frac{\eta_{W,{\rm H}}(C)}{\eta_{W,{\rm H}}(\R^+)},
\]
where
\[
\eta_{W,{\rm H}}(C) := \int_W \I\{r_B(x,Z)\in C\}\I\{d_B(x,Z) \leq d_B(x,\partial W)\}
\frac{f\bigl(d_B(x,Z)\bigr)}{h_B\bigl(d_B(x,Z),r_B(x,Z)\bigr)}\,\d x.
\]
Note that for $B=[0,u]$ the estimators $\widehat{\G}_{{\rm H}}$ and $\widehat{\G}_{{\rm u}}$ coincide.

In practical applications one has to replace in \eqref{etaW} 
the integration with respect to Lebesgue measure by an integration with
respect to a discrete measure. This still gives a ratio-unbiased
estimator of $\G$.

We compare the performance of the different estimators discussed above through computer simulations. 
We simulate a stationary planar Boolean model with spherical grains, given by \eqref{eq:1.1}.
The observation window $W$ is the unit square $[0,1]^2$.
The distribution $\G$ is assumed to be uniform on $(0.05,0.1)$. 
We approximate the integrals over $W$ by Riemannian sums over
a rectangular grid of points $L_h \cap W$, where 
\[
L_h := \left\{\bigl((k-1/2)h,(l-1/2)h\bigr): k,l \in \N\right\}. 
\]
For our purposes, we choose $h=1/300$.

We take $f(t) = {\varepsilon}^{-1}\I\{t\le\varepsilon\}$ for different
choices of $\varepsilon$ and compare the estimator $\widehat{\G}$, given by \eqref{estimator2}, 
with the estimators $\widehat{\G}_o$ (for spherical $B$) and $\widehat{\G}_l$ 
(for linear $B$) corresponding to the limiting case $\varepsilon \to 0$.
The estimators $\widehat{\G}_{\ominus \varepsilon}$, $\widehat{\G}_{{\rm u}}$
and $\widehat{\G}_{{\rm H}}$ are also evaluated.
For linear $B=[0,u]$ we always combine the corresponding estimators
for $u=e_1,-e_1,e_2,-e_2$, this leads to a noticeable improvement.

The radius distribution $\G$ is uniquely determined by the
distribution function $G(t) = \G([0,t])$, $t \geq 0$.
We measure the quality of the estimators by the Kolmogorov-Smirnov distance 
\[
d_{{\rm KS}}({\widehat G},G) := \sup_{s \geq 0} |{\widehat G}(s)-G(s)| 
\]
and the Cram\'er-von Mises distance
\[
d_{{\rm CvM}}({\widehat G},G) := \int_{0.05}^{0.1} \bigl({\widehat G}(s)-G(s)\bigr)^2\,\frac{\d s}{0.05}.
\]

We have generated 100 independent realizations of the Boolean model $Z$ with
chosen intensity $\gamma$.  
For each realization we have determined several estimators under study. 
The sample means of corresponding Kolmogorov-Smirnov and Cram\'er-von Mises distances 
over 100 simulations are demonstrated in Table \ref{tab1} for $\gamma=25$ and
in Table \ref{tab2} for $\gamma=100$.
The results show that smaller values of $\varepsilon$ are more preferable. The limiting
estimators $\widehat{\G}_o$ and $\widehat{\G}_l$ produced the smallest error. They are
outperformed only in the case of smaller intensity and linear $B$ where our estimator,
given by \eqref{estimator2}, with larger $\varepsilon$, gives better results.
However, this estimator uses also information from outside $W$.
Simulation studies for exponentially distributed radii (not presented) show very similar 
results. A change of resolution $h$ has only a minor influence on the quality of the 
estimators. For intensity $\gamma \gg 100$ the deviation from the radius distribution 
increases which is intuitively clear because many balls are covered so that their radii 
are not available for the estimators.

\begin{table} 
\centering
\begin{tabular}{|c|cc|cc|}
\hline
& \multicolumn{2}{|c|}{$d_{{\rm KS}}$} & \multicolumn{2}{|c|}{$1000 \cdot d_{{\rm CvM}}$} \\
estimator & spherical $B$ & linear $B$ & spherical $B$ & linear $B$ \\ 
\hline
$\widehat{\G}$, $\varepsilon=1$                         & 0.178    & 0.147     & 7.921     & 5.139   \\
$\widehat{\G}$, $\varepsilon=0.05$                      & 0.172    & 0.170     & 7.317     & 7.101   \\
$\widehat{\G}$, $\varepsilon=0.01$                      & 0.172    & 0.172     & 7.295     & 7.292   \\
\hline
$\widehat{\G}_o$ or $\widehat{\G}_l$                      & 0.171    & 0.172     & 7.243   & 7.257   \\
\hline
$\widehat{\G}_{\ominus \varepsilon}$, $\varepsilon=0.05$  & 0.191    & 0.177     & 9.243   & 7.753   \\
$\widehat{\G}_{\ominus \varepsilon}$, $\varepsilon=0.01$  & 0.176    & 0.173     & 7.674   & 7.435   \\
\hline
$\widehat{\G}_{{\rm u}}$, $\varepsilon=1$                 & 0.182    & 0.179     & 8.389   & 7.890   \\
$\widehat{\G}_{{\rm u}}$, $\varepsilon=0.05$              & 0.173    & 0.169     & 7.480   & 7.553   \\
$\widehat{\G}_{{\rm u}}$, $\varepsilon=0.01$              & 0.173    & 0.168     & 7.322   & 7.472   \\
\hline
$\widehat{\G}_{{\rm H}}$, $\varepsilon=1$                 & 0.187    & 0.179     & 9.003   & 7.890   \\
$\widehat{\G}_{{\rm H}}$, $\varepsilon=0.05$              & 0.179    & 0.169     & 8.023   & 7.553   \\
$\widehat{\G}_{{\rm H}}$, $\varepsilon=0.01$              & 0.174    & 0.168     & 7.462   & 7.472   \\
\hline
\end{tabular}
\caption{Sample means of distances between distribution functions computed from 100 realizations of a Boolean model
with intensity $\gamma=25$ and uniform radius distribution on $(0.05,0.1)$.}
\label{tab1}
\end{table}

\begin{table} 
\centering
\begin{tabular}{|c|cc|cc|}
\hline
& \multicolumn{2}{|c|}{$d_{{\rm KS}}$} & \multicolumn{2}{|c|}{$1000 \cdot d_{{\rm CvM}}$} \\
estimator & spherical $B$ & linear $B$ & spherical $B$ & linear $B$ \\ 
\hline
$\widehat{\G}$, $\varepsilon=1$                         &  0.147   &  0.134    &  5.506    & 4.406   \\
$\widehat{\G}$, $\varepsilon=0.05$                      &  0.145   &  0.131    &  5.294    & 4.238   \\
$\widehat{\G}$, $\varepsilon=0.01$                      &  0.132   &  0.128    &  4.276    & 4.008   \\
\hline
$\widehat{\G}_o$ or $\widehat{\G}_l$                      & 0.127    &  0.127    & 3.919   & 3.928   \\
\hline
$\widehat{\G}_{\ominus \varepsilon}$, $\varepsilon=0.05$  & 0.158    &  0.135    &  6.162  & 4.460   \\
$\widehat{\G}_{\ominus \varepsilon}$, $\varepsilon=0.01$  & 0.134    &  0.129    &  4.359  & 4.029   \\
\hline
$\widehat{\G}_{{\rm u}}$, $\varepsilon=1$                 & 0.150    &  0.140    &  5.710  & 4.838   \\
$\widehat{\G}_{{\rm u}}$, $\varepsilon=0.05$              & 0.147    &  0.137    &  5.431  & 4.807   \\
$\widehat{\G}_{{\rm u}}$, $\varepsilon=0.01$              & 0.133    &  0.129    &  4.299  & 4.208   \\
\hline
$\widehat{\G}_{{\rm H}}$, $\varepsilon=1$                 & 0.150    &  0.140    & 5.602   & 4.838   \\
$\widehat{\G}_{{\rm H}}$, $\varepsilon=0.05$              & 0.148    &  0.137    & 5.438   & 4.807   \\
$\widehat{\G}_{{\rm H}}$, $\varepsilon=0.01$              & 0.133    &  0.129    & 4.323   & 4.208   \\
\hline
\end{tabular}
\caption{Sample means of distances between distribution functions computed from 100 realizations of a Boolean model
with intensity $\gamma=100$ and uniform radius distribution on $(0.05,0.1)$.}
\label{tab2}
\end{table}

\bigskip

\noindent
{\em Authors' addresses:}
\vspace{.5cm}\\
\noindent
Daniel Hug, 
Karlsruhe Institute of Technology (KIT), Department of Mathematics,  

e-mail: daniel.hug@kit.edu

\medskip

\noindent
G{\"u}nter Last, Karlsruhe Institute of Technology (KIT), Department of Mathematics, 

e-mail:  guenter.last@kit.edu

\medskip

\noindent
Zbyn\v ek Pawlas, Department of Probability and Mathematical Statistics,
Faculty of Mathematics and Physics, Charles University, Sokolovsk\'a 83,
186$\,$75 Praha 8, Czech Republic, 

e-mail: {pawlas@karlin.mff.cuni.cz}

\medskip

\noindent
Wolfgang Weil, Karlsruhe Institute of Technology (KIT), Department of Mathematics, 

e-mail:  wolfgang.weil@kit.edu

\end{document}